\numberwithin{equation}{section}
\newtheorem{theorem}{Theorem}[section]
\newtheorem{definition}[theorem]{Definition}
\newtheorem{proposition}[theorem]{Proposition}
\newtheorem{lemma}[theorem]{Lemma}
\newtheorem{remark}[theorem]{Remark}
\newtheorem{example}[theorem]{Example}
\newcommand{\supp}{{\rm Supp}}
\newcommand{\vol}{\mathop{\mathrm{vol}}}
\newcommand{\ddc}{dd^c}
\newcommand{\defeq}{\vcentcolon=}
\newcommand{\PSH}{{\rm PSH}}
\newcommand{\C}{\mathbb{C}}
\newcommand{\N}{\mathbb{N}}
\newcommand{\R}{\mathbb{R}}
\title{\bf Singularities of currents of full mass intersection}
\providecommand{\keywords}[1]{\textbf{\textit{Keywords:}} #1}
\providecommand{\subject}[1]{\textbf{\textit{Mathematics Subject Classification 2010:}} #1}
\author{Shuang Su}
\newcommand{\Addresses}{{
                \bigskip
                \footnotesize
                \textsc{Shuang Su, University of Cologne, Division of Mathematics, Department of Mathematics and Computer Science, Weyertal 86-90, 50931, K\"oln,  Germany}
                \noindent
                \par\nopagebreak
                \noindent
                \textit{E-mail address}: \texttt{ssu1@uni-koeln.de}        
}}
\date{\today}
\begin{document}
\maketitle
\begin{abstract}

   In this paper, we study currents that have full mass intersection with respect to given currents in the mixed setting on a compact K\"ahler manifold. We compare their singularities by using Lelong numbers. Our main theorems generalize some results of Vu.



\end{abstract}


\noindent
\keywords {relative non-pluripolar product}
, {Lelong number}, {full mass intersection}.
\\

\noindent
\subject{32U15}, {32Q15}.

\section{Introduction}
     Let $(X,\omega)$ be a compact K\"ahler manifold of dimension $n$. 
     Let $\alpha,\beta \in H^{p,p}(X,\R)$. We say $\alpha \geq \beta$ if there exists a closed positive $(p,p)$-current $R \in \alpha - \beta$. Given a closed real $(p,p)$-current $R$, we denote by $\{R\} \in H^{p,p}(X,\R)$ the cohomology class of $R$. Let $\theta$ be a smooth closed real $(1,1)$-form on $X$. We denote by $\PSH(X,\theta)$ the set of $\theta$-plurisubharmonic functions on $X$ ($\theta$-psh in short). Let $u \in \PSH(X,\theta)$, we denote by $\theta_{u} \defeq \ddc u + \theta$, which is a closed positive $(1,1)$-current on $X$. 
     We say 

     \begin{itemize}
         \item[(1)] $\{\theta\}$ is K\"ahler if there exists a K\"ahler form in $\{\theta\}$.

         \item[(2)] $\{\theta\}$ is big if there exists $u \in \PSH(X,\theta)$ such that $ \theta_{u} =\ddc u + \theta \geq \epsilon \omega$ for some $\epsilon >0$. The current $\theta_{u}$ is called a K\"ahler current.
        
        \item[(3)] $\{\theta\}$ is pseudoeffective if $\PSH(X,\theta)$ is non empty. This means that there exists a closed positive $(1,1)$-current $T \in \{\theta\}$.
     \end{itemize}
     
     Let $\{\theta_{1}\}, \dots, \{\theta_{m}\}$ be pseudoeffective classes on $X$, $1 \leq m \leq n$. Let $T_{j},T_{j}' \in \{\theta_{j}\}$ be closed positive $(1,1)$-current, $1 \leq j \leq m$, such that $T_{j}$ is less singular than $T'_{j}$, meaning that the potential of $T_{j}$ is greater than the potential of $T'_{j}$, modulo an additive constant. The following monotonicity property (see \cite{BEGZ,Lu-Darvas-DiNezza-mono,Viet-generalized-nonpluri,WittNystrom-mono}) of the non-pluripolar product plays a crucial role in pluripotential theory:
     \[
        \big{\{}\big{\langle} \wedge_{j=1}^{m} T'_{j}  \big{\rangle} \big{\}} \leq \big{\{} \big{\langle} \wedge_{j=1}^{m} T_{j} \big{\rangle}\big{\}},
     \]
     Here, $\langle \cdot \rangle$ denotes the non-pluripolar product (see Section \ref{nonpluripolarsection}). When the equality holds, we say $T'_1, \dots ,T'_m$ have full mass intersection with respect to $T_1, \dots , T_m$. The aim of this paper is to investigate the singularities of $T_{j}$ and $T'_{j}$ when $T'_{1}, \dots, T'_{m}$ have full mass intersection with respect to $T_{1}, \dots , T_{m}$.

     Let $T $ be a closed positive $(1,1)$-current on $X$. The Lelong number is a notion measuring the singularities of $T$ and is defined as follows. Let $x \in X$, and write $T=\ddc \varphi$ around $x$, where $\varphi$ is a plurisubharmonic (psh) function. The Lelong number of $T$ at $x$ is denoted and defined by 
     \[
        \nu(T,x) \defeq \max \{ \gamma \in \R^{+} | \varphi(z) \leq \gamma\log |z-x|+ O(1) \text{ near $x$}\}.
     \]
     Let $V$ be an analytic subset of $X$. By the well-know theorem of Siu on Lelong numbers in \cite{Siu}, we have 
     \[
        \nu(T,x) = \min_{x' \in V} \{\nu(T,x')\},
     \]
     for $x \in V$ outside some proper analytic subset of $V$. The right-hand side of the above equation is called the generic Lelong number of $T$ along $V$ and is denoted by $\nu(T,V)$.
       
     Let $\{ \theta \}$ be a big class, we say a closed positive current $T \in \{\theta\}$ is big if $\int_{X} \langle T^{n} \rangle > 0$. A characterization of the bigness of $T$ is provided in \cite[Proposition 3.6]{Darvas_Xia-volumelinebundle}, which states that there exists a K\"ahler current $P \in \{ \theta \}$ that is more singular than $T$. Let $u \in \PSH (X, \theta)$. The $\mathcal{I}$-model envelope $P^{\theta}[u]_{\mathcal{I}} \in \PSH(X,\theta)$ was introduced and studied in \cite{DarvasXiaconfigu}, and is defined as
     \[
        P^{\theta}[u]_{\mathcal{I}} \defeq  (\sup \{ w \in \PSH(X,\theta) | w \leq 0, \mathcal{I}(tw) \subset \mathcal{I}(t u)), t \geq 0\})^{*},
     \]
     where $(\cdot)^{*}$ denotes the upper-semicontinuous envelope, and $\mathcal{I}(tu)$ denotes the multiplier ideal sheaf, locally generated by holomorphic functions $f$ such that $|f|^{2} e^{-t u}$ is integrable. 

     We say a closed positive $(1,1)$-current $T = \ddc u + \theta$ is $\mathcal{I}$-model if $u = P^{\theta}[u]_{\mathcal{I}}$.
     The following is our main result. We consider the case where $T_{1} , \dots , T_{m}$ are big and $\mathcal{I}$-model (this condition can be slightly relaxed, see Remark \ref{relaxrmk}), and we compare the Lelong numbers of $T_{j}$ and $T'_{j}$ under the full mass intersection assumption.



    

    \begin{theorem}
    \label{maintheorem}
        Let $\{\theta_{1}\}, \dots ,\{\theta_{m}\}$ be big classes, and let $T'_{j}, T_{j} \in \{\theta_{j}\}$ be closed positive $(1,1)$-currents such that
        \begin{itemize}
            \item[(1)]
                $T_{j}$ is big and is $\mathcal{I}$-model.
            \item[(2)]
                $T_{j}$ is less singular than $T'_{j}$.
        \end{itemize}
        Let $V$ be a proper irreducible analytic subset such that $\dim(V) \geq n-m$. If $T'_1, \dots , T'_m $ are of full mass intersection with respect to $T_{1}, \dots , T_{m}$. Then, there exists $1 \leq j \leq m$ such that $\nu(T'_{j},V)=\nu (T_j,V)$. 
    \end{theorem}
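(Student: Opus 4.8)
The plan is to argue by contradiction: suppose that for every $j$ we have $\nu(T'_j,V) > \nu(T_j,V)$, and derive that the monotonicity inequality on mass must be strict, contradicting full mass intersection. The bridge between the pointwise (or generic-along-$V$) Lelong number hypothesis and the global mass of the non-pluripolar product is a \emph{local mass comparison near a generic point of $V$}. Concretely, I would fix a generic point $x\in V$ where $\nu(T_j,x)=\nu(T_j,V)$ and $\nu(T'_j,x)=\nu(T'_j,V)$ for all $j$ simultaneously (possible since each exceptional set is a proper analytic subset of $V$), choose a small coordinate ball $B$ around $x$, and compare $\int_B \langle \wedge_j T_j\rangle$ with $\int_B \langle \wedge_j T'_j\rangle$. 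If I can show the latter is strictly smaller, then since monotonicity gives $\langle\wedge_j T'_j\rangle \le$ (in the sense of masses on every Borel set, after subtracting) — more precisely since the cohomological inequality is an equality but the inequality of masses holds locally — I get a contradiction with the equality of total masses.

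The first substantive step is to reduce to a local statement on a ball, using the fact that $T_j$ is big and $\mathcal{I}$-model. Bigness gives, by \cite[Proposition 3.6]{Darvas_Xia-volumelinebundle}, a K\"ahler current more singular than $T_j$, which lets me work with currents dominating a fixed K\"ahler form away from a pluripolar set; the $\mathcal{I}$-model condition is what makes the local Lelong-number data actually \emph{control} the non-pluripolar mass, because for $\mathcal{I}$-model potentials the singularity is governed by the multiplier ideals, hence (via Skoda / Demailly regularization and the comparison of singularity types in \cite{DarvasXiaconfigu}) by the Lelong numbers along the relevant subvarieties. The second step is the genuinely local inequality: on $B$, if $u_j \le u'_j$ (potentials, $u_j$ less singular) and $\nu(u'_j,x) > \nu(u_j,x)$ for all $j$, then $\int_B \langle \ddc u'_1 \wedge \cdots \wedge \ddc u'_m \wedge \omega^{n-m}\rangle < \int_B \langle \ddc u_1\wedge\cdots\wedge\ddc u_m\wedge\omega^{n-m}\rangle$. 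The idea here is a Demailly-type mass estimate: each strict drop in Lelong number forces a strict drop in the non-pluripolar mass carried on shrinking balls, and one combines the $m$ factors multiplicatively, handling the $\omega^{n-m}$ factor and the constraint $\dim V \ge n-m$ so that the intersection with a generic $(n-m)$-dimensional slice through $x$ is zero-dimensional and nonempty.

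The third step is the globalization: knowing $\int_B \langle\wedge_j T'_j\rangle < \int_B\langle\wedge_j T_j\rangle$ on a single ball $B$, and $\int_X\langle\wedge_j T'_j\rangle = \int_X \langle\wedge_j T_j\rangle$ by hypothesis, I need $\int_{X\setminus B}\langle\wedge_j T'_j\rangle \le \int_{X\setminus B}\langle\wedge_j T_j\rangle$ to reach a contradiction. This is where I would invoke the local nature of monotonicity: the monotonicity results of \cite{Lu-Darvas-DiNezza-mono,Viet-generalized-nonpluri,WittNystrom-mono} are proved via local comparisons, and in the $\mathcal{I}$-model setting one has, by \cite{DarvasXiaconfigu}, that full mass intersection is equivalent to an equality of local masses on every open set (a "no mass escapes" / locality principle). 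So the strict local deficit on $B$ is incompatible with global equality. The main obstacle I expect is precisely making this locality rigorous in the \emph{mixed} setting: the non-pluripolar product $\langle\wedge_j T_j\rangle$ is not simply a sum of local contributions, and one must be careful that the full-mass-intersection equality really does localize, using the $\mathcal{I}$-model hypothesis on the $T_j$ to replace each $T'_j$ by a well-chosen less singular current without changing masses — a regularization argument in the style of Witt Nystr\"om and Darvas--Xia. A secondary technical point is verifying the strict inequality in the local Lelong estimate when $\dim V > n-m$, which requires slicing $V$ by generic hyperplanes to reduce to the critical dimension $n-m$ while keeping the Lelong numbers of all $T_j, T'_j$ at the generic values.
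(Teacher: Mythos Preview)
Your proposal has a genuine gap at the globalization step (your ``third step''), and the local comparison (your ``second step'') is also not established.

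The claim that monotonicity localizes---i.e.\ that $\int_{X\setminus B}\langle\wedge_j T'_j\rangle \le \int_{X\setminus B}\langle\wedge_j T_j\rangle$ for an open set $B$---is false in general. Theorem~\ref{monotonicity} is a statement about cohomology classes (equivalently, total masses against closed forms), proved by global integration-by-parts arguments; it does \emph{not} assert that $\langle\wedge_j T'_j\rangle \le \langle\wedge_j T_j\rangle$ as measures, and indeed mass can redistribute. The assertion you attribute to \cite{DarvasXiaconfigu}, that in the $\mathcal{I}$-model setting full mass intersection forces equality of masses on every open set, is not a result in that paper and is not true. So even if you had the strict local deficit on $B$, you could not conclude.

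The local deficit itself is also unproved. The non-pluripolar product puts no mass on pluripolar sets, so a larger Lelong number at $x$ does not by itself force smaller non-pluripolar mass on a ball around $x$; there is no ``Demailly-type mass estimate'' of the shape you describe for non-pluripolar products. The paper's method avoids both problems by blowing up along (a desingularization of) $V$: the Lelong number $\nu(T_j,V)$ becomes the coefficient of the exceptional divisor $[\widehat{V}]$ in $\sigma^*T_j$, and the strict excess $\lambda'_j-\lambda_j$ is converted into a \emph{cohomological} term $(\lambda'_m-\lambda_m)\{\langle\wedge_{j=1}^{m-1}\widetilde{\eta}_j^{\delta_j}\dot{\wedge}[\widehat{V}]\rangle\}$ that is bounded below using an auxiliary K\"ahler current $Q_j^\delta\ge \frac{\delta\epsilon}{2c_V}\widehat{\omega}$ (Lemmas~\ref{qcomparelemma}--\ref{mainlemma} and inequality~(\ref{inequ4})). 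The $\mathcal{I}$-model hypothesis enters only at the end, to pass from the analytic-singularity case to the general case via Demailly approximation and Lemma~\ref{generalizeddarvasconvergencethm}; it is not used to ``localize'' anything.
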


    Theorem \ref{maintheorem} was proved in \cite{GZ-weighted} for the case that $\theta_{1} = \dotsi =\theta_{n} = \omega$ is a K\"ahler form, $T_{1}'= \dotsi = T_{n}'$ and $T_{1} = \dotsi = T_{n} \in \{\omega\}$ is of minimal singularities. This result was later generalized in \cite{Vu_lelong-bigclass} to the setting where $\{\theta_{1}\}, \dots , \{\theta_{m}\}$ are big classes, and each $T_{j} \in \theta_{j}$ is of minimal singularities. For the self-intersection case ($T_{1}'= \dotsi = T_{n}'$, $T_{1} = \dotsi=T_{n}$), a characterization of currents having full mass intersection was given in \cite{Lu-Darvas-DiNezza-mono} in terms of the singularity types of the envelopes of the given currents.

    When $m=n$, we have the following more general quantitative result.


    \begin{theorem}
    \label{maintheroem2}
    Let $\mathscr{B}$ be a closed cone in the cone of big classes. There exists a constant $C>0$ only depends on the manifold $X$ and the cone $\mathscr{B}$ such that for every $x_{0} \in X$, $\{\theta_{j}\} \in \mathscr{B}$ and $T'_{j}, T_{j} \in \{\theta_{j}\}$, $1 \leq j \leq n$, satisfied the conditions in Theorem \ref{maintheorem}.
    The following inequality holds
    \begin{equation}
    \label{inequa1.2}
        \int_{X} \Big{(} \big{\langle} \wedge_{j=1}^{n} T_{j} \big{\rangle} - \big{\langle} \wedge_{j=1}^{n}T'_{j} \big{\rangle}  \Big{)}\geq C \prod_{j=1}^{n} \big{(} \nu (T'_{j},x_{0})-\nu(T_{j},x_{0})\big{)}.
    \end{equation}
    \end{theorem}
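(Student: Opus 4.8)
The plan is to reduce \eqref{inequa1.2} to a single localised estimate expressing a ``quantitative strict monotonicity'' of the non-pluripolar product near $x_0$, and then to extract uniformity of the constant from compactness of $X$ and of $\mathscr B$. First, since $T_j$ is less singular than $T'_j$ we have $\nu(T'_j,x_0)\ge\nu(T_j,x_0)$, so $a_j:=\nu(T'_j,x_0)-\nu(T_j,x_0)\ge 0$; if some $a_j=0$ the right-hand side of \eqref{inequa1.2} vanishes while the left-hand side is $\ge 0$ by the monotonicity recalled in the introduction (for $m=n$ this is an inequality of real numbers, obtained by integrating over $X$), so we may assume $a_j>0$ for all $j$. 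Next, telescope: for $1\le k\le n$ let $\mathcal E_k$ denote the difference of the total masses of the non-pluripolar products of $T'_1,\dots,T'_{k-1},\bullet,T_{k+1},\dots,T_n$ with $\bullet=T_k$, resp.\ $\bullet=T'_k$, in the $k$-th slot. Then $\sum_{k=1}^n\mathcal E_k$ is exactly the left-hand side of \eqref{inequa1.2}, and each $\mathcal E_k\ge 0$ by monotonicity (only the $k$-th slot is made more singular). It therefore suffices to bound $\mathcal E_n$ from below, and in $\mathcal E_n$ the slots $1,\dots,n-1$ carry $T'_1,\dots,T'_{n-1}$, whose Lelong numbers at $x_0$ are $\nu(T'_j,x_0)\ge a_j>0$.

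The heart of the argument is to show that there is $c>0$, depending only on $X$, $\omega$ and $\mathscr B$, such that
\[
 \mathcal E_n\ \ge\ c\,a_n\prod_{j=1}^{n-1}\nu(T'_j,x_0),
\]
which already proves the theorem, since $\nu(T'_j,x_0)\ge a_j$. Fix a coordinate ball $B=B(x_0,r_0)$ on which $\theta_j=\ddc h_j$ with $h_j$ smooth; on $B$ the two products occurring in $\mathcal E_n$ are governed by the local potentials $\phi'_j:=h_j+u'_j$ for $j\le n-1$ and by $\phi_n:=h_n+u_n$ versus $\phi'_n:=h_n+u'_n\le\phi_n$, the Lelong numbers of $\ddc\phi_n$ and $\ddc\phi'_n$ at $x_0$ being $\nu(T_n,x_0)$ and $\nu(T_n,x_0)+a_n$. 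Via an integration-by-parts identity for non-pluripolar products, $\mathcal E_n$ equals the mass that the product of $\phi'_1,\dots,\phi'_{n-1},\phi'_n$ loses, relative to that of $\phi'_1,\dots,\phi'_{n-1},\phi_n$, on the polar locus of $u'_n$; the part of this loss sitting at $x_0$ reflects the jump $a_n$ in the Lelong number of the last slot, and Demailly's comparison theorem for Lelong numbers of wedge products — comparing with the model currents $\nu(T'_j,x_0)\,\ddc\log|z-x_0|$ in the first $n-1$ slots and a logarithmic pole of weight $a_n$ in the last — bounds it below by $c\,a_n\prod_{j\le n-1}\nu(T'_j,x_0)$. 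This is where the hypotheses that the $T_j$ are big and $\mathcal I$-model enter, both to legitimise the integration by parts (finiteness of the masses involved) and to ensure that the monotonicity defect $\mathcal E_n$ is no smaller than the Lelong-number gap predicts.

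For the uniformity of $c$, compactness of $X$ yields a finite atlas of balls of a fixed radius with uniformly comparable coordinates; since $\mathscr B$ is a closed cone in the cone of big classes, after normalisation one may fix smooth representatives of the classes in $\mathscr B$ with uniformly bounded $C^0$-norm on these charts, and the Lelong numbers at any point of closed positive $(1,1)$-currents in such classes are uniformly bounded. All the constants entering the previous paragraph (the radius $r_0$, the constants relating $\theta_j$ to the Euclidean metric on $B$, the Demailly comparison constant) then depend only on $X$, $\omega$ and $\mathscr B$, giving \eqref{inequa1.2} with $C=c$.

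The step I expect to be the main obstacle is the lower bound for $\mathcal E_n$: upgrading the qualitative monotonicity to a quantitative mass-loss estimate governed by the Lelong-number gaps. Making the integration by parts rigorous for non-pluripolar products of possibly unbounded potentials — precisely the place where the $\mathcal I$-model hypothesis is indispensable — and running Demailly's comparison against the log-pole model currents uniformly in all the data is where essentially all the work lies; the reductions in the first paragraph and the compactness argument for uniformity are routine by comparison.
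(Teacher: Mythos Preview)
Your telescoping reduction and the handling of the trivial case $a_j=0$ are fine, and the compactness argument for uniformity is indeed routine. The genuine gap is exactly where you suspect: the lower bound for $\mathcal E_n$.

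The two tools you invoke do not do what you need. Demailly's comparison theorem gives $\nu\big(\bigwedge_j S_j,\,x_0\big)\ge\prod_j\nu(S_j,x_0)$ for Bedford--Taylor products of currents with \emph{locally bounded} potentials near $x_0$ (or, more generally, when the Monge--Amp\`ere operator is well defined there). Here the potentials $\phi'_1,\dots,\phi'_{n-1}$ have $-\infty$ poles at $x_0$, so no classical wedge product exists to carry a Lelong number, while the non-pluripolar product $\langle T'_1\wedge\cdots\wedge T'_{n-1}\wedge\,\cdot\,\rangle$ puts \emph{zero} mass at $x_0$ by construction. There is thus no ``mass sitting at $x_0$'' to read off from a comparison theorem. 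Likewise, there is no integration-by-parts identity for non-pluripolar products that expresses $\mathcal E_n$ as a pointwise loss at $x_0$; the mass defect is a global cohomological quantity, and the known integration-by-parts formulas (e.g.\ in \cite{BEGZ}) require at least one bounded potential and do not localise the defect.

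The paper closes this gap by a completely different mechanism. One blows up $X$ at $x_0$ so that the Lelong-number gaps become coefficients of the exceptional divisor $[\widehat V]$, and then the crucial positivity input is not Demailly's comparison but the \emph{bigness} of each $T_j$: this furnishes K\"ahler currents $P_j\ge\epsilon\omega$ more singular than $T_j$, from which one builds genuine K\"ahler currents $Q_j^{\delta_j}\ge\frac{\delta_j\epsilon}{2c_V}\widehat\omega$ on $\widehat X$ that are more singular than suitable envelopes $\widetilde\eta_j^{\delta_j}$. Monotonicity on $\widehat X$ then yields, after wedging with $[\widehat V]$, a term bounded below by $\prod_{j<n}\frac{\delta_j\epsilon}{2c_V}\int_{\widehat X}[\widehat V]\wedge\widehat\omega^{n-1}>0$, with $\delta_j$ proportional to $\lambda'_j-\lambda_j$; this is inequality \eqref{inequ4}. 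The $\mathcal I$-model hypothesis is used not to justify any integration by parts, but to guarantee (via Lemma~\ref{generalizeddarvasconvergencethm}) that the mass of $\langle\wedge_j T_j^D\rangle$ under Demailly's analytic approximation converges back to that of $\langle\wedge_j T_j\rangle$, so that \eqref{inequ4} survives the passage from analytic to general singularities. Your proposal uses neither the K\"ahler currents nor the blow-up, and misidentifies the role of $\mathcal I$-model; without these ingredients the quantitative lower bound on $\mathcal E_n$ remains unproved.
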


    We note that our proof strategy is based on \cite{Vu_lelong-bigclass}, where we generalize Theorems 1.1 and 1.2 from that paper. 

    The organization of this paper is as follows. In section 2, we recall the singularity types of psh function and their envelopes. We also review the non-plruipolar product and some of its important properties.
    We conclude this section by recalling Demailly's analytic approximation theorem. Finally, Theorem \ref{maintheorem} and Theorem \ref{maintheroem2} are proved in section 3.\\

    \noindent
    \textbf{Acknowledgments.} 
    I am grateful to my advisor, Duc-Viet Vu, for his valuable discussions. This research is partially funded by the Deutsche Forschungsgemeinschaft (DFG, German Research Foundation)-Projektnummer 500055552 and by the ANR-DFG grant QuaSiDy, grant no ANR-21-CE40-0016.

\section{Preliminaries}

\subsection{Singularity type and Envelopes}
    
    Let $\{\theta\}$ be a pseudoeffective class on $X$. Let $u,v \in \PSH(X, \theta)$. We say $u$ is less singular (more singular) than $v$ if there exists a constant $C>0$ such that $v \leq u +C$ ($u \leq v +C$). This gives us a equivalence relation on $\PSH(X,\theta)$, and we denote by $[u]$ the singularity type of $u$, which is the equivalence class contains $u$. We denote by $[v] \preceq [u]$ if $u$ is less singular than $v$. Let $T_{1} = \ddc u_{1}+ \theta$ and $T_{2} = \ddc u_{2} + \theta$ be two closed positive $(1,1)$-currents in the same cohomology class. We say that $T_{1}$ is less singular than $T_{2}$ if $[u_{1}] \succeq [u_{2}]$, and we denote this by $T_{1} \succeq T_{2}$. Understanding singularity type of $\theta$-psh function plays an important role in pluripotential theory, see \cite{Lu-Darvas-DiNezza-mono,Darvas-Lu-DiNezza-singularity-metric, DarvasXiaconfigu}.
    

    We recall the envelopes of psh function (current) in this section, following the construction in \cite{DarvasXiaconfigu}. We refer to \cite{Lu-Darvas-DiNezza-logconcave, Darvas-Lu-DiNezza-singularity-metric, Darvas_Xia-volumelinebundle} for further results.

    
    \begin{definition}
        Let $\{\theta\}$ be a pseudoeffective class, and let $u \in \PSH(X, \theta)$. We define the upper envelop and $\mathcal{I}$-envelope of $u$ as
        \begin{align*}
            &P^{\theta}[u] \defeq (\sup\{w \in \PSH(X,\theta) | [w] \preceq [u], \quad w\leq 0\})^{*}\in \PSH(X,\theta) \text{ and}\\
            & P^{\theta}[u]_{\mathcal{I}} \defeq  (\sup \{ w \in \PSH(X,\theta) | w \leq 0, \quad \mathcal{I}(tw) \subset \mathcal{I}(t u)), \quad t \geq 0\})^{*}  \in \PSH(X,\theta),
        \end{align*}
        respectively. Note that $P^{\theta}[u],P_{\mathcal{I}}^{\theta}[u]$ only depends on the singularity type of $u$.

        Let $T = \ddc u + \theta$ be a closed positive $(1,1)$-current current, where $u \in \PSH(X,\theta)$. The upper envelope and $\mathcal{I}$-envelope of $T$ are defined as 
        \begin{align*}
            &P[T] \defeq \theta_{P^{\theta}[u]} =\ddc P^{\theta}[u] +\theta \text{ and}\\
            &P[T]_{\mathcal{I}} \defeq \theta_{P^{\theta}[u]_{\mathcal{I}}} =\ddc P^{\theta}[u]_{\mathcal{I}} + \theta,
        \end{align*}
        respectively.
    \end{definition}

    \begin{remark}
    \label{welldefinedenvelope}
        The envelopes $P[T]$ and $P[T]_{\mathcal{I}}$ of $T$  are independent of the choice of $\theta$ and $u \in \PSH(X, \theta)$.
    \end{remark}
    \begin{proof}
        let $\theta'$ be a smooth closed real $(1,1)$-form cohomologous to $\theta$. Write $T = \ddc u'+ \theta'$, where $u' \in \PSH(X,\theta')$. By the $\partial \bar{\partial}$-lemma, there exists smooth function $f \leq 0$ on $X$ such that $\ddc f = \theta - \theta'$.
        Now, we have 
        \begin{align*}
            T &= \ddc u + \theta \\
            &= \ddc (u+ f) + \theta'\\
            &= \ddc u' + \theta'.
        \end{align*}
        This shows that $[u+f]=[u']$, and we get
        \begin{align*}
            \ddc P^{\theta}[u] + \theta &= \ddc( P^{\theta}[u]+f) + \theta'\\
            &= \ddc P^{\theta'}[u+f] + \theta'\\
            & = \ddc P^{\theta'}[u']+ \theta'.
        \end{align*}
        Therefore, we get the well-definedness of $P[T]$. The proof of the well-definedness of $P[T]_{\mathcal{I}}$ is similar.
    \end{proof}

    Let $u \in \PSH(X,\theta)$. We say $u$ is $\mathcal{I}$-model if $u= P^{\theta}[u]_{\mathcal{I}}$. Similarly, we say the closed positive $(1,1)$-current $T$ is   $\mathcal{I}$-model if $T= P[T]_{\mathcal{I}}$. Here ,we give some examples of $\mathcal{I}$-model $\theta$-psh functions.

    \begin{example}
    \mbox{}
        \begin{itemize}
            \item[(1)] 
            Set $V_{\theta} := (\sup \{ v \in \PSH(X,\theta)| v \leq 0\})^{*}$, which is of minimal singularities in $\PSH(X,\theta)$. Since $V_{\theta}$ is less singular than all $\theta$-psh functions, it is $\mathcal{I}$-model.

            \item[(2)] 
            Let $u \in \PSH(X,\theta)$. By \cite[Proposition 2.18]{DarvasXiaconfigu}, we have 
            \[
                P^{\theta}[P^{\theta}[u]_{\mathcal{I}}]_{\mathcal{I}}= P^{\theta}[u]_{\mathcal{I}}.
            \]
            Hence, $P^{\theta}[u]_{\mathcal{I}}$ is $\mathcal{I}$-model.

        \end{itemize}
        
    \end{example}

    Let $f \colon X \longrightarrow [-\infty,\infty]$ be a function on $X$. The envelope with respect to $f$ is defined as
    \[
        P^{\theta}(f) := (\sup\{v \in \PSH(X,\theta) | v \leq f\})^{*},
    \]
    which was introduced and studied in \cite{Darvas-Lu-DiNezza-singularity-metric}.

    \begin{definition}
    \label{differenceofcurrents}    
        Let $T_{1}=\ddc u_{1} + \theta_{1},T_{2}= \ddc u_{2}+ \theta_{2}$ be closed positive $(1,1)$-currents, where $u_{1} \in \PSH(X,\theta_{1}),u_{2} \in \PSH(X,\theta_{2})$.
        We define the difference envelope of $T_{1}$ and $T_{2}$ as
        \[
            P(T_{1}-T_{2}) \defeq \ddc P^{\theta_{1}-\theta_{2}}(u_{1}-u_{2})+ (\theta_{1}-\theta_{2}).
        \]
        Here, we note that if $u_{1}(x)=u_{2}(x)=-\infty$ for $x \in X$, then we define $(u_{1}-u_{2})(x) = - \infty$.
    Similar to Remark \ref{welldefinedenvelope}, one can check that $P(T_1 -T_2)$ is independent of the choices of $\theta_{j}$ and $u_{j}$ for $j=1,2$.
    \end{definition}

\subsection{Relative non-pluripolar products}
\label{nonpluripolarsection}
    We recall the non-pluripolar product and some of its properties in this section. Non-pluripolar product is a notion of wedge product of closed positive $(1,1)$-currents. It was introduced in \cite{BT_fine_87, BEGZ, GZ-weighted}, and was generalized to the relative setting in \cite{Viet-generalized-nonpluri}. We explain briefly how to do it.
     
    Let $(X,\omega)$ be a compact Kähler manifold of dimension $n$. Let $T_1, \dots, T_m$ be closed positive $(1,1)$-currents, and let $T$ be a closed positive $(p,p)$-current, where $p+m \leq n$. For $1 \leq j \leq m$, write $T_{j} = \ddc u_{j}+ \theta_{j}$, 
    where $u_{j} \in \PSH(X,\theta_{j})$. Set 
    \[
        R_k \defeq \mathbbm{1}_{\cap_{j=1}^{m}\{u_j >-k\}}\wedge_{j=1}^{m}(\ddc \max\{u_j,-k\}+ \theta_j) \wedge T,
    \]
    which is positive for each $k$ , see\cite[Lemma 3.2]{Viet-generalized-nonpluri}. By \cite[Lemma 3.4]{Viet-generalized-nonpluri}, one can see that the mass of $R_{k}$ is uniformly bounded in $k$ and $\{R_{k}\}_{k}$ converges weakly to a closed positive current, denoted by $\langle T_1 \wedge \dotsi T_m \dot{\wedge} T \rangle$, which is called the non-pluripolar product relative to $T$ of $T_1, \dots, T_m$. 
    
    Let $T$ be a closed positive $(p,p)$-current, the mass of $T$ is defined and denoted as 
    \[
        \|T\| \defeq \int_{X} T \wedge \omega^{n-p}.
    \]
    Let $P$ be a closed positive $(1,1)$-current. We denote by $I_{P}$ the pole of $P$, which is the set of $x \in X$ such that the potential of $P$ equal to $- \infty$ at $x$. A set $A \subset X$ is complete pluripolar if $A = \{x | u(x) = - \infty\}$ for some quasi-psh function $u$ on $X$. The following are some basic properties of the relative non-pluripolar product, derived from \cite[Proposition 3.5]{Viet-generalized-nonpluri}.

    \begin{proposition}
    \label{pro-sublinearnonpluripolar} 
    \mbox{}
        \begin{itemize}
            \item[(1)]
                For $R:= \langle \wedge_{j=l+1}^m T_j \dot{\wedge} T \rangle$, we have $\langle \wedge_{j=1}^m T_j \dot{\wedge} T \rangle = \langle \wedge_{j=1}^l T_j \dot{\wedge} R \rangle$.

            \item[(2)]
                For every complete pluripolar set $A$, we have 
                $$\mathbbm{1}_{X \backslash A}\langle \wedge_{j=1}^{m} T_{j}\dot{\wedge} T\rangle= \big\langle \wedge_{j=1}^{m}T_{j}  \dot{\wedge}(\mathbbm{1}_{X \backslash A} T)\big\rangle.$$
                In particular, the equality
                $$\langle \wedge_{j=1}^m T_j \dot{\wedge} T \rangle = \langle \wedge_{j=1}^m T_j \dot{\wedge} T' \rangle$$
                holds, where $T'\defeq \mathbbm{1}_{X \backslash \cup_{j=1}^m I_{T_j}} T$.
        \end{itemize}
    \end{proposition}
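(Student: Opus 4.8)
This is \cite[Proposition 3.5]{Viet-generalized-nonpluri}; here is how I would argue it. The plan is to reduce both identities, via the plurifine locality of the relative non-pluripolar product, to the corresponding statements for the classical Bedford--Taylor product on plurifine open sets with locally bounded potentials, where symmetry and associativity hold, and then to pass to all of $X$ using that the relative product charges no pluripolar set.

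For (1), I would write $T_j = \ddc u_j + \theta_j$ with $u_j \in \PSH(X,\theta_j)$ and set $O_k \defeq \cap_{j=1}^m\{u_j > -k\}$, a plurifine open set. On $O_k$ the truncations $\ddc\max\{u_j,-k\}+\theta_j$ coincide with $T_j$, so by plurifine locality $\langle\wedge_{j=1}^m T_j\dot{\wedge} T\rangle$ restricts on $O_k$ to the Bedford--Taylor product $\wedge_{j=1}^m(\ddc\max\{u_j,-k\}+\theta_j)\wedge T$; likewise $R$ restricts on $\cap_{j=l+1}^m\{u_j>-k\} \supseteq O_k$ to $\wedge_{j=l+1}^m(\ddc\max\{u_j,-k\}+\theta_j)\wedge T$, hence $\langle\wedge_{j=1}^l T_j\dot{\wedge} R\rangle$ restricts on $O_k$ to $\wedge_{j=1}^l(\ddc\max\{u_j,-k\}+\theta_j)\wedge R$. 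Associativity of the Bedford--Taylor product of bounded potentials identifies both of these with $\wedge_{j=1}^m(\ddc\max\{u_j,-k\}+\theta_j)\wedge T$ on $O_k$. Since $\cup_k O_k = X \setminus \cup_{j=1}^m I_{T_j}$ and neither side of (1) charges the pluripolar set $\cup_j I_{T_j}$, the two sides agree on all of $X$.

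For (2), I would write $A = \{v = -\infty\}$ with $v$ quasi-psh on $X$. On the plurifine open set $O_k \setminus A$ one has $\mathbbm{1}_{X\setminus A}T = T$, so by the same locality argument $\langle\wedge_{j=1}^m T_j\dot{\wedge}(\mathbbm{1}_{X\setminus A}T)\rangle$ and $\langle\wedge_{j=1}^m T_j\dot{\wedge} T\rangle$ restrict to the same Bedford--Taylor product there. Exhausting $X\setminus(A\cup\cup_j I_{T_j})$ by the sets $O_k\setminus A$, and using that the relative non-pluripolar product charges no pluripolar set (in particular not $A$), I would conclude $\mathbbm{1}_{X\setminus A}\langle\wedge_{j=1}^m T_j\dot{\wedge} T\rangle = \mathbbm{1}_{X\setminus A}\langle\wedge_{j=1}^m T_j\dot{\wedge}(\mathbbm{1}_{X\setminus A}T)\rangle = \langle\wedge_{j=1}^m T_j\dot{\wedge}(\mathbbm{1}_{X\setminus A}T)\rangle$. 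The ``in particular'' statement then follows by taking $A = \cup_{j=1}^m I_{T_j}$, which is complete pluripolar because each $I_{T_j} = \{u_j = -\infty\}$ is.

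The step I expect to be the main obstacle --- and the reason this is genuinely \cite[Proposition 3.5]{Viet-generalized-nonpluri} rather than a formal manipulation --- is the justification of what I used as black boxes: that the weak limit $\lim_k R_k$ defining the relative product exists and is a closed positive current of uniformly bounded mass (\cite[Lemma 3.4]{Viet-generalized-nonpluri}), and that this limit is local in the plurifine topology and charges no pluripolar set. Granting these, the associativity and restriction bookkeeping above is routine.
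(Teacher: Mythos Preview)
The paper does not prove this proposition; it is stated as a preliminary result ``derived from \cite[Proposition 3.5]{Viet-generalized-nonpluri}'' with no argument given. Your proposal correctly identifies the source and gives a reasonable sketch of the plurifine-locality argument underlying that reference, so there is nothing to compare against in the present paper.
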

    
    The relative non-pluripolar product has the following crucial monotonicity property.

    \begin{theorem}[\cite{Viet-generalized-nonpluri}, Theorem 4.4]
    \label{monotonicity}
        Let $T'_j, T_j$ be closed positive $(1,1)$-currents on $X$, $1 \leq j \leq m$, such that $T'_j,T_j$ are in the same cohomology class and $T_j$ is less singular than $T'_j$. Let $T$ be closed positive $(p,p)$-current on $X$ such that $p+m \leq n$. Then, we have 
        \[
            \big{\{\langle} \wedge_{j=1}^{m} T'_{j} \dot{\wedge}T \big{\rangle\}} \leq \big{\{\langle} \wedge_{j=1}^{m} T_{j} \dot{\wedge} T \big{\rangle\}}.
        \]
        Furthermore, if $T$ is of bi-degree $(1,1)$ and $T' \in \{T\}$ is a closed positive $(1,1)$-current which is more singular than $T$. Then, 
        \[
            \big{\{\langle} \wedge_{j=1}^{m} T'_{j} \dot{\wedge}T' \big{\rangle\}} \leq \big{\{\langle} \wedge_{j=1}^{m} T_{j} \dot{\wedge} T \big{\rangle\}}.
        \]
    \end{theorem}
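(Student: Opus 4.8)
\emph{Sketch of proof.} The plan is to reduce the statement to the case $m=1$, and then to settle that case by an interpolation in the singularity type; the one genuinely delicate point will be an invariance property of the cohomology class of the non-pluripolar product.

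\emph{Reduction to $m=1$.} The product $\langle\wedge_{j=1}^{m}T_{j}\dot\wedge T\rangle$ is symmetric in $T_{1},\dots,T_{m}$ (clear from the defining sequence $R_{k}$), and by Proposition~\ref{pro-sublinearnonpluripolar}(1) any single factor absorbs into the relative current: $\langle\wedge_{j=1}^{m}T_{j}\dot\wedge T\rangle=\langle T_{i}\dot\wedge\langle\wedge_{j\neq i}T_{j}\dot\wedge T\rangle\rangle$. Hence, setting
\[
  S_{i}\defeq\big\langle T_{1}\wedge\cdots\wedge T_{i-1}\wedge T'_{i+1}\wedge\cdots\wedge T'_{m}\dot\wedge T\big\rangle ,
\]
a closed positive $(p+m-1,p+m-1)$-current, one has $\langle T_{1}\wedge\cdots\wedge T_{i-1}\wedge T'_{i}\wedge T'_{i+1}\wedge\cdots\dot\wedge T\rangle=\langle T'_{i}\dot\wedge S_{i}\rangle$ and the same identity with $T'_{i}$ replaced by $T_{i}$; telescoping over $i=1,\dots,m$ reduces the first inequality to $\{\langle T'_{i}\dot\wedge S_{i}\rangle\}\le\{\langle T_{i}\dot\wedge S_{i}\rangle\}$, i.e.\ to the case $m=1$ with an arbitrary closed positive relative current. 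For the ``furthermore'' assertion: when $T$ has bidegree $(1,1)$ the product $\langle\wedge_{j}T_{j}\dot\wedge T\rangle$ is the non-pluripolar product of the $m+1$ bidegree-$(1,1)$ currents $T_{1},\dots,T_{m},T$, so applying the first assertion with relative current $T'$ and then the case $m=1$ to replace the factor $T'$ by the less singular $T$ gives $\{\langle\wedge_{j}T'_{j}\dot\wedge T'\rangle\}\le\{\langle\wedge_{j}T_{j}\dot\wedge T'\rangle\}\le\{\langle\wedge_{j}T_{j}\dot\wedge T\rangle\}$. So everything comes down to the following: if $T=\theta_{u}$, $T'=\theta_{u'}$ lie in $\{\theta\}$ with $u'\le u\le 0$ and $S$ is a closed positive $(p,p)$-current with $p<n$, then $\{\langle\theta_{u'}\dot\wedge S\rangle\}\le\{\langle\theta_{u}\dot\wedge S\rangle\}$.

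\emph{The case $m=1$.} I would interpolate between $u$ and $u'$ through the family $w_{t}\defeq\max(u',u-t)$, $t\ge 0$: each $w_{t}$ is $\theta$-psh, $w_{0}=u$, $w_{t}\searrow u'$ as $t\to\infty$, and $u-t\le w_{t}\le u$, so $w_{t}$ has the same singularity type as $u$. The sets $\{u'<u-t\}$ and $\{u'>u-t\}$ are plurifine open, with $w_{t}=u-t$ (hence $\theta_{w_{t}}=\theta_{u}$) on the former and $w_{t}=u'$ on the latter; by the plurifine locality of the non-pluripolar product (a consequence of Proposition~\ref{pro-sublinearnonpluripolar}(2)), for all but countably many $t$ one gets
\[
  \langle\theta_{w_{t}}\dot\wedge S\rangle=\mathbbm{1}_{\{u'<u-t\}}\langle\theta_{u}\dot\wedge S\rangle+\mathbbm{1}_{\{u'\ge u-t\}}\langle\theta_{u'}\dot\wedge S\rangle .
\]
Passing to cohomology classes, using $\{\langle\theta_{w_{t}}\dot\wedge S\rangle\}=\{\langle\theta_{u}\dot\wedge S\rangle\}$ (invariance under singularity type, discussed below) and the positivity of $\mathbbm{1}_{\{u'<u-t\}}\langle\theta_{u}\dot\wedge S\rangle$, one obtains $\{\langle\theta_{u}\dot\wedge S\rangle\}\ge\{\mathbbm{1}_{\{u'\ge u-t\}}\langle\theta_{u'}\dot\wedge S\rangle\}$. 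As $t\to\infty$ the sets $\{u'\ge u-t\}$ increase to the complement of the pluripolar set $\{u'=-\infty\}$, on which $\langle\theta_{u'}\dot\wedge S\rangle$ puts no mass (Proposition~\ref{pro-sublinearnonpluripolar}(2)), so the right-hand side tends to $\{\langle\theta_{u'}\dot\wedge S\rangle\}$; since the cone of classes carrying a closed positive representative is closed, $\{\langle\theta_{u}\dot\wedge S\rangle\}\ge\{\langle\theta_{u'}\dot\wedge S\rangle\}$, as wanted.

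\emph{The main obstacle.} The step I expect to be hardest is the invariance invoked above: that $\{\langle\theta_{u}\dot\wedge S\rangle\}$ depends on $u$ only through its singularity type $[u]$. I would establish this first for $u$ with analytic singularities, where after a log-resolution of the associated ideal $\langle\theta_{u}\dot\wedge S\rangle$ is the push-forward of the non-pluripolar product upstairs with its divisorial part deleted, so that this current — and with it the defect $\{\theta\}\cdot\{S\}-\{\langle\theta_{u}\dot\wedge S\rangle\}$ — is determined by the exceptional divisor and hence by $[u]$; and then for general $u$ via Demailly's analytic approximation theorem, using that $\langle\theta_{u}\dot\wedge S\rangle$ is recovered from the canonical truncations $\mathbbm{1}_{\{u>-k\}}(\ddc\max(u,-k)+\theta)\wedge S$ and that cohomology classes vary continuously in the decreasing limits at hand. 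The remaining points — uniform convergence of the truncated classes, the choice of generic $t$ so that the products charge no mass on $\{u'=u-t\}$, and the plurifine-locality bookkeeping over pluripolar sets — should be routine once this invariance is in place.
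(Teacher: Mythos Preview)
The paper does not prove this theorem at all: it is quoted verbatim from \cite[Theorem~4.4]{Viet-generalized-nonpluri} and used as a black box, so there is no ``paper's own proof'' to compare your sketch against.

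That said, a few comments on your outline. The reduction to $m=1$ via associativity and telescoping, and then the interpolation $w_{t}=\max(u',u-t)$, is exactly the architecture used in the literature (Witt~Nystr\"om, Darvas--Di~Nezza--Lu for the classical product, and Vu for the relative version), so the skeleton is right. The point you flag as the main obstacle --- that $\{\langle\theta_{u}\dot\wedge S\rangle\}$ depends only on $[u]$ --- is indeed the heart of the matter, but your proposed route to it (analytic singularities plus Demailly approximation) is not how this is done and is in danger of circularity: passing to the limit in Demailly's sequence typically requires exactly the semicontinuity/monotonicity you are trying to establish. In the actual proofs, invariance under bounded perturbation is obtained \emph{directly} from the truncation definition: if $|u-v|\le C$ then the level sets $\{u>-k\}$ and $\{v>-k\}$ are nested up to a shift in $k$, and plurifine locality together with an integration-by-parts argument on the bounded truncations $\max(u,-k)$, $\max(v,-k)$ shows the limiting currents are cohomologous. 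Once that is in hand, your interpolation argument goes through as written. So the gap in your sketch is not the strategy but the mechanism for the invariance step: replace the Demailly-approximation detour by a direct bounded-potential argument at the level of the truncations.
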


    The following lemma will be used in the proof of our main theorem.

    \begin{lemma}
    \label{nonpluripolar_complete}
        Let $\{\theta_{1} \} , \dots , \{\theta_{m}\}$ be pseudoeffective classes, $1 \leq m \leq n$. For $1 \leq j \leq m$, let $T'_j, T_j \in \{\theta_{j}\}$ be closed positive $(1,1)$-currents such that $\supp(T'_j) \subset W_j$ for some complete pluripolar set $W_j$. Then, the following equality holds
        \[
            \langle \wedge_{j=1}^{m}(T_{j} + T'_{j}) \rangle = \langle \wedge_{j=1}^{m} T_{j} \rangle.
        \]
    \end{lemma}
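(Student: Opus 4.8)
The plan is to reduce the claim to the local, plurisubharmonic setting and then exploit the defining property of the non-pluripolar product, namely that it ignores pluripolar sets. First I would recall that by Proposition \ref{pro-sublinearnonpluripolar}(2), both sides of the asserted equality are unchanged if we replace the ambient integration by restriction to the complement of the poles of the currents involved; in particular the current $T'_j$, being supported on the complete pluripolar set $W_j$, satisfies $\nu(T'_j, x) > 0$ — more to the point, its potential is $-\infty$ on a set containing $\supp(T'_j)$ minus a pluripolar set. Writing $T_j = \ddc u_j + \theta_j$ and $T'_j = \ddc u'_j + \theta_j$ with $u_j, u'_j \in \PSH(X,\theta_j)$, we have $T_j + T'_j = \ddc(u_j + u'_j) + 2\theta_j$, and since $u'_j$ is locally bounded above but equals $-\infty$ off a pluripolar neighborhood-complement of $W_j$, the potential $u_j + u'_j$ agrees with $u_j$ (up to the relevant normalization) precisely outside $W_j \cup I_{T_j}$, which is again pluripolar.

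The key step is then the following: on the set $\Omega \defeq X \setminus \bigcup_{j=1}^m (W_j \cup I_{T_j})$, which is the complement of a complete pluripolar set, the local potentials of $T_j + T'_j$ and of $T_j$ differ by a pluriharmonic (indeed locally bounded psh minus psh) function, because $u'_j$ restricted to $\Omega$ is locally bounded — away from $W_j$ the function $u'_j$ need not be bounded, but the point is that $W_j$ already contains all the singularities of $T'_j$, so on $\Omega$ the current $T'_j$ is a smooth (in fact closed positive, locally the $\ddc$ of a bounded potential; here one uses that $\supp(T'_j)\subset W_j$ forces $T'_j$ to vanish as a current on $\Omega$). Hence $T'_j = 0$ on $\Omega$, so $T_j + T'_j = T_j$ on $\Omega$. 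Now I would invoke the construction of the non-pluripolar product via the cutoffs $R_k = \ind_{\cap_j\{u_j > -k\}}\wedge_j(\ddc\max\{u_j,-k\}+\theta_j)$: the non-pluripolar product $\langle \wedge_{j=1}^m (T_j+T'_j)\rangle$ is by construction carried by $\Omega$ (it puts no mass on the pluripolar set $X\setminus\Omega$), and on $\Omega$ the currents $T_j + T'_j$ and $T_j$ coincide, so the truncations — and therefore their weak limits — agree. This gives $\langle \wedge_{j=1}^m (T_j+T'_j)\rangle = \langle \wedge_{j=1}^m T_j\rangle$.

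The main obstacle I anticipate is making rigorous the assertion that $T'_j$ restricts to the zero current on $X\setminus W_j$: a priori $\supp(T'_j)\subset W_j$ only says $T'_j$ vanishes on the open set $X\setminus W_j$, but $W_j$ is pluripolar, hence has empty interior, so $X\setminus W_j$ is dense but the support condition as stated is vacuous unless interpreted correctly. The right formulation is that $T'_j$ is supported on (the closure of) $W_j$, or equivalently that its potential $u'_j$ equals $-\infty$ on a dense-complement set; the clean way around this is to work not with $\supp$ but with the pole set and use Proposition \ref{pro-sublinearnonpluripolar}(2) directly: since $\langle \wedge_{j=1}^m(T_j+T'_j)\rangle$ puts no mass on $\bigcup_j I_{T_j+T'_j} \supset \bigcup_j I_{T'_j}$, and $I_{T'_j}$ is a complete pluripolar set whose complement is where $u'_j$ is finite — one checks that the cutoff currents defining the product are insensitive to adding the $\theta_j$-psh function $u'_j$ which is $-\infty$ on a pluripolar set, by repeating the mass estimates of \cite[Lemma 3.4]{Viet-generalized-nonpluri} and passing to the limit. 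I would carry out exactly this truncation-level comparison, using that $\max\{u_j + u'_j, -k\} = \max\{u_j,-k\}$ outside a pluripolar set for each fixed $k$, to conclude.
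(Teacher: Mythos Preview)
Your overall strategy---show that the two products agree off a complete pluripolar set and then invoke that non-pluripolar products charge no mass on such sets---is sound in spirit, but the execution you sketch has real problems, and the paper's argument is both different and much shorter.

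The paper simply invokes multi-linearity of the non-pluripolar product (\cite[Proposition 1.4]{BEGZ}) to expand
\[
\big\langle \wedge_{j=1}^{m}(T_j+T'_j)\big\rangle \;=\; \big\langle \wedge_{j=1}^{m} T_j\big\rangle \;+\; \sum_s R_s,
\]
where each $R_s$ is a non-pluripolar product containing at least one factor $T'_j$. Since $T'_j$ vanishes off $W_j$, locality of the construction forces $\supp(R_s)\subset W=\cup_j W_j$; but $R_s$, being a non-pluripolar product, puts no mass on the complete pluripolar set $W$, so $R_s=0$. That is the entire proof.

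Your route avoids multi-linearity and instead tries to compare the truncations directly. Two concrete issues: first, the ``obstacle'' you raise is not one. The condition $\supp(T'_j)\subset W_j$ is not vacuous; $\supp(T'_j)$ is closed, and in every application in this paper $W_j$ is (contained in) an analytic set, so $T'_j=0$ on the open complement is a genuine and usable statement. Second, and more seriously, your final claim that $\max\{u_j+u'_j,-k\}=\max\{u_j,-k\}$ outside a pluripolar set is false: on $X\setminus W_j$ the potential $u'_j$ satisfies $\ddc u'_j+\theta_j=0$, so it is smooth there but certainly not zero, and the two truncated potentials (which are taken with respect to different reference forms $\theta_j$ and $2\theta_j$) do not agree pointwise. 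What is true is that the \emph{currents} $T_j+T'_j$ and $T_j$ agree on $X\setminus W_j$, and one must use locality of the non-pluripolar product at the level of currents, not of potentials. Once you phrase it that way the argument goes through, but it amounts to re-proving the multi-linearity ingredient that the paper simply quotes.
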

    \begin{proof}
        By the multi-linearity of non-pluripolar product (\cite[Proposition 1.4]{BEGZ}), we have  
        \[
            \langle \wedge_{j=1}^{m} (T_{j} + T'_{j}) \rangle=\langle \wedge_{j=1}^{m} T_{j}\rangle + \sum_{s=1}^{m} R_{s},
        \]
        where $R_{s} = c_{j}\langle \wedge_{j=1}^{s} T'_{j} \wedge \wedge_{\ell = s+1}^{m} T_{\ell} \rangle$, $c_{j} \in \N^{+}$. 
        By the construction of non-pluripolar product, we see that $R_{s}$ is supported in 
        $W = \cup_{j=1}^{n}W_{j}$, where $W$ is a complete pluripolar set. On the other hand, the non-pluripolar product of currents does not charge mass on complete pluripolar set. Therefore, we get $R_{s}=0$ for $1 \leq s \leq m$, which completes the proof.

    \end{proof}

   Let $T$ be a closed positive $(1,1)$-current on $X$.
   We say that $T$ has analytic singularities if its potential $\phi$ can be written as 
   \[
        c\log\sum_{j=1}^{m}|f_{j}|^{2}+g,
   \]
   where $c > 0$, $f_{j}$ are holomorphic functions, and $g$ is a smooth function on $X$. The lemma below generalizes \cite[Lemma 2.5, Lemma 3.1]{Vu_lelong-bigclass}.

    \begin{lemma}
    \label{replacementlemma2}
        Let $\{\theta_{1}\}, \dots ,\{\theta_{m}\}$ be pseudoeffective $(1,1)$-classes, $1 \leq m \leq n $. Let $T'_j , T_{j} \in \{\theta_j\}$ be closed positive $(1,1)$-currents such that $T_{j}$ is less singular than $T'_{j}$, $1 \leq j \leq m$. Set $T_{j}^{p} \defeq \mathbbm{1}_{I_{T_{j}}} T_{j}$. If there exists a K\"ahler current $P_{j} \in \{\theta_{j}\}$ with analytic singularities that is more singular than $T_{j}$. Then, $\mathbbm{1}_{I_{T'_{j}}}T'_j \geq T^{p}_{j}$ for $1 \leq j \leq m$, and the following holds. 
        \begin{align}
            \langle \wedge_{j=1}^{m} T'_{j} \rangle &= \langle \wedge_{j=1}^{m}(T'_j - T^{p}_{j}) \rangle,\label{replacementequality}\\     
            \|\langle \wedge_{j=1}^{m} T'_{j} \rangle\| &=  \| \langle \wedge_{j=1}^{m} P(T'_{j} - T^{p}_{j})  \rangle\|.\label{replacementnormequality}
        \end{align}
    \end{lemma}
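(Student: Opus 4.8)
Here is the proof I would give; I treat each index $j$ separately and reduce everything to the divisorial structure of the polar part $T_j^p=\mathbbm 1_{I_{T_j}}T_j$.

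\medskip
\noindent\textbf{Setup and identification of $T_j^p$.}
Write $T_j=\ddc u_j+\theta_j$, $T'_j=\ddc u'_j+\theta_j$, $P_j=\ddc\psi_j+\theta_j$ with $u_j,u'_j,\psi_j\in\PSH(X,\theta_j)$, so that $u'_j\le u_j+O(1)$ and $\psi_j\le u_j+O(1)$. Since $P_j$ has analytic singularities, $Z_j:=I_{P_j}$ is a closed analytic subset; from $\psi_j\le u_j+O(1)$ we get $I_{T_j}\subseteq Z_j$, and from $u'_j\le u_j+O(1)$ we get $I_{T_j}\subseteq I_{T'_j}$. I will also use that the non-pluripolar part $\langle S\rangle=\mathbbm 1_{X\setminus I_S}S$ of a closed positive $(1,1)$-current $S$ is closed and positive (\cite{BEGZ}), so that $S^p=S-\langle S\rangle$ is a closed positive $(1,1)$-current. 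The first step is to show that $T_j^p$ is nothing but the Siu divisorial part of $T_j$. I would decompose $Z_j$ into its finitely many codimension-one prime components $D^j_1,\dots,D^j_{r_j}$ together with a closed analytic set of codimension $\ge 2$, and use the classical facts (\cite{Siu}) that a closed positive $(1,1)$-current charges no analytic set of codimension $\ge2$, that $\mathbbm 1_D S=\nu(S,D)[D]$ for a prime divisor $D$, and that $\nu(T_j,D)>0$ forces $u_j\equiv-\infty$ on $D$. These give $\mathbbm 1_{I_{T_j}\cap D^j_k}T_j=\nu(T_j,D^j_k)[D^j_k]$ for each $k$ (if $\nu(T_j,D^j_k)>0$ then $D^j_k\subseteq I_{T_j}$; if $\nu(T_j,D^j_k)=0$ then $\mathbbm 1_{D^j_k}T_j=0$ already dominates it), while $I_{T_j}$ meets the codimension-$\ge2$ part in a set of no mass; hence
\[
 T_j^p=\sum_{k=1}^{r_j}\nu(T_j,D^j_k)[D^j_k].
\]
In particular $T_j^p$ has analytic (divisorial) singularities and $\supp T_j^p\subseteq Z_j$, a closed complete pluripolar set.

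\medskip
\noindent\textbf{The inequality $\mathbbm 1_{I_{T'_j}}T'_j\ge T_j^p$.}
Since $I_{T_j}\subseteq I_{T'_j}$, one has $\mathbbm 1_{I_{T'_j}}T'_j\ge\mathbbm 1_{I_{T_j}}T'_j$. Running the same computation for $T'_j$, but still restricting to $I_{T_j}$, I would get $\mathbbm 1_{I_{T_j}}T'_j=\sum_{k:\,D^j_k\subseteq I_{T_j}}\nu(T'_j,D^j_k)[D^j_k]$. Because $T'_j$ is more singular than $T_j$, $\nu(T'_j,D^j_k)\ge\nu(T_j,D^j_k)$ for all $k$, and $\nu(T_j,D^j_k)=0$ whenever $D^j_k\not\subseteq I_{T_j}$, so
\[
 \mathbbm 1_{I_{T'_j}}T'_j\ \ge\ \mathbbm 1_{I_{T_j}}T'_j\ \ge\ \sum_{k=1}^{r_j}\nu(T_j,D^j_k)[D^j_k]\ =\ T_j^p .
\]
This is the first assertion; moreover $T'_j=\mathbbm 1_{I_{T'_j}}T'_j+\langle T'_j\rangle\ge T_j^p$, so $T'_j-T_j^p$ is a closed positive $(1,1)$-current.

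\medskip
\noindent\textbf{The two identities.}
For \eqref{replacementequality} I would expand $\langle\wedge_{j=1}^m((T'_j-T_j^p)+T_j^p)\rangle$ by multilinearity of the non-pluripolar product (\cite[Proposition 1.4]{BEGZ}); every term other than $\langle\wedge_{j=1}^m(T'_j-T_j^p)\rangle$ involves a factor $T_j^p$, hence is supported in the complete pluripolar set $\bigcup_jZ_j$ and therefore vanishes, the non-pluripolar product carrying no mass on complete pluripolar sets (exactly as in the proof of Lemma \ref{nonpluripolar_complete}). This gives \eqref{replacementequality}. By \eqref{replacementequality}, the identity \eqref{replacementnormequality} reduces to $\|\langle\wedge_{j=1}^m(T'_j-T_j^p)\rangle\|=\|\langle\wedge_{j=1}^mP(T'_j-T_j^p)\rangle\|$. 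If $\nu(T'_j,D^j_k)>\nu(T_j,D^j_k)$ for every $D^j_k\subseteq I_{T_j}$, then, writing $T_j^p=\ddc w_j+\theta_j^p$ with $\theta_j^p\in\{T_j^p\}$ smooth, the function $u'_j-w_j$ tends to $-\infty$ along each $D^j_k$ and is already the $(\theta_j-\theta_j^p)$-psh potential of $T'_j-T_j^p$, so $P(T'_j-T_j^p)=T'_j-T_j^p$ and there is nothing to prove; in general I would deduce the equality from this case using the orthogonality/mass-preservation property of the envelope $P^{\theta_j-\theta_j^p}(\cdot)$ from \cite{DarvasXiaconfigu,Darvas-Lu-DiNezza-singularity-metric}, which guarantees that passing to the difference envelope does not change the mass of the relative non-pluripolar product. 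Combined with \eqref{replacementequality}, this yields \eqref{replacementnormequality}.

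\medskip
\noindent\textbf{Where the difficulty lies.}
The hard part is the identification $T_j^p=\sum_k\nu(T_j,D^j_k)[D^j_k]$: without extra hypotheses $\mathbbm 1_{I_{T_j}}T_j$ could a priori carry mass on a non-analytic pluripolar set and need not have pluripolar support. The hypothesis that some Kähler current $P_j$ with analytic singularities is more singular than $T_j$ is used precisely here — it confines $I_{T_j}$ inside the analytic set $Z_j$, so that only the finitely many codimension-one components of $Z_j$ can carry mass and the classical structure of $\mathbbm 1_DS$ applies; this is also what makes $\supp T_j^p$ closed and complete pluripolar, as required for the multilinear expansion. The other delicate point is \eqref{replacementnormequality} along divisors where the Lelong numbers of $T'_j$ and $T_j$ coincide: there the difference envelope is strictly more singular than $T'_j-T_j^p$, and ruling out a loss of mass of the non-pluripolar product needs the envelope machinery rather than a direct computation.
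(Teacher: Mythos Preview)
Your identification of $T_j^p$ as the Siu divisorial part, the proof of $\mathbbm 1_{I_{T'_j}}T'_j\ge T_j^p$, and the derivation of \eqref{replacementequality} via multilinearity are all correct and essentially match the paper's argument (the paper phrases the divisorial identification through Demailly's first and second support theorems rather than Siu, but the content is the same).

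The genuine gap is in \eqref{replacementnormequality}. Your treatment of the strict-inequality case is fine, but the reduction of the general case to it is not a proof: ``the orthogonality/mass-preservation property of the envelope'' is not a precise statement, and none of the results in the references you cite directly asserts that $\|\langle\wedge_j P(S_j)\rangle\|=\|\langle\wedge_j S_j\rangle\|$ for the difference envelope $P(\cdot)$ in the mixed setting. You yourself flag this as the delicate point, and it remains unfilled. A perturbation argument making the Lelong numbers strict would change the cohomology classes and does not obviously close the gap either.

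The paper bypasses this entirely with a short Choquet argument. Writing $T_j^p=\ddc u_j^p+\theta_j^p$ and $w_j=P^{\theta_j-\theta_j^p}(u'_j-u_j^p)$, one picks by Choquet's lemma an increasing sequence $v_{j,k}\nearrow w_j$ a.e.\ with each $v_{j,k}\in\PSH(X,\theta_j-\theta_j^p)$ and $v_{j,k}\le u'_j-u_j^p$. Convergence in capacity together with \cite[Theorem 2.3]{Lu-Darvas-DiNezza-mono} gives
\[
\big\|\big\langle\textstyle\wedge_{j=1}^m P(T'_j-T_j^p)\big\rangle\big\|
=\lim_{k\to\infty}\big\|\big\langle\textstyle\wedge_{j=1}^m (\theta_j-\theta_j^p)_{v_{j,k}}\big\rangle\big\|.
\]
Since $(\theta_j)_{v_{j,k}+u_j^p}=(\theta_j-\theta_j^p)_{v_{j,k}}+T_j^p$ with $T_j^p$ supported on a complete pluripolar set, Lemma~\ref{nonpluripolar_complete} turns each term into $\|\langle\wedge_j(\theta_j)_{v_{j,k}+u_j^p}\rangle\|$, and $v_{j,k}+u_j^p\le u'_j$ then yields the upper bound $\le\|\langle\wedge_j T'_j\rangle\|$ by monotonicity. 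The reverse inequality is immediate from \eqref{replacementequality} and monotonicity applied to $T'_j-T_j^p\preceq P(T'_j-T_j^p)$. This is the missing step you need; it requires no case analysis and no unspecified envelope property.
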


    \begin{proof}
        The strategy of the proof is based on \cite{Vu_lelong-bigclass}. First, by applying Lemma \ref{nonpluripolar_complete}, we immediately obtain  (\ref{replacementequality}). We now show that $\mathbbm{1}_{I_{T'_{j}}}T'_{j} \geq T_{j}^{p}$ holds for $1 \leq j \leq m$. Since $T_{j}$ is less singular than $P_{j}$, we get $\supp(T^{p}_{j}) \subset I_{P_{j}}= V_{j}$, where $V_{j}$ is an analytic subset. By the first support theorem (\cite[Chapter \uppercase\expandafter{\romannumeral3}, Theorem 2.10]{Demailly_ag}), the analytic subset $V_{j}$ is of dimension $n-1$. Furthermore, the second support theorem (\cite[Chapter \uppercase\expandafter{\romannumeral3}, Theorem 2.13]{Demailly_ag}) implies that 
        \[
            T_{j}^{p}= \sum_{V_{j,k}} \lambda_{j,k} [V_{j,k}],
        \]
        where $\{V_{j,k}\}$ are the irreducible components of $V_{j}$. Since $T_{j}$ is less singular than $T'_{j}$, we get 
        \[
            \nu(T'_{j},V_{j,k}) \geq \nu(T_{j},V_{j,k}) \geq \nu(T^{p}_{j}, V_{j,k}) = \lambda_{j,k}
        \]
        for each irreducible component $V_{j,k}$. This shows that $T'_{j} \geq T_{j}^{p}$, and we get $\mathbbm{1}_{I_{T'_{j}}}T'_j \geq T^{p}_{j}$.

        Now, we prove (\ref{replacementnormequality}). We write  
        \[
            T'_{j} = (\theta_{j})_{u'_{j}} = \ddc u'_{j} + \theta_{j}, \quad T^{p}_{j} = (\theta^{p}_{j})_{u^{p}_{j}} =  \ddc u^{p}_{j} + \theta^{p}_{j},
        \]
        where $u_{j}' \in \PSH(X,\theta_{j})$, $u_{j}^{p} \in \PSH(X,\theta_{j}^{p})$. Recall that the difference envelope of $T_{j}'$ and $T_{j}^{p}$ is defined by $P(T_{j}'-T_{j}^{p}) = \ddc w_{j} + (\theta_{j}- \theta_{j}^{p})$, where 
        \[
            w_{j} \defeq (\sup \{ v_{j} \in \PSH(X, \theta_{j}-\theta^{p}_{j}) | v_{j} \leq u'_{j}-u_{j}^{p}    
            \})^{*}.
        \]
        Set $K_{j} = \{ v_{j} \in \PSH(X, \theta_{j}-\theta^{p}_{j}) | v_{j} \leq u'_{j}-u_{j}^{p} \}$.
        By Choquet's lemma, there exists a sequence $v_{j,k} \nearrow w_j$ a.e. in $K_{j}$. Hence, $v_{j,k} \longrightarrow v_{j}$ in capacity. By \cite[Theorem 2.3]{Lu-Darvas-DiNezza-mono}, we get
        \begin{align}
        \label{replacementlemma2eq}
            \| \langle \wedge_{j=1}^{m} P(T'_{j} - T^{p}_{j})\rangle\| &= \lim_{k \rightarrow \infty} \| \langle \wedge_{j=1}^{m} (\theta_{j} - \theta^{p}_{j})_{v_{j,k}}  \rangle \| 
        \end{align}
        By Lemma \ref{nonpluripolar_complete}, and Theorem \ref{monotonicity}, we then have 
        \begin{equation*}               (\ref{replacementlemma2eq})= \lim_{k \rightarrow \infty} \|\langle \wedge_{j=1}^{m}(\theta_{j})_{v_{j,k}+u^{p}_{j}} \rangle\|
            \leq \| \langle \wedge_{j=1}^{m}T'_{j}\rangle \|.
        \end{equation*}
        On the other hands, by using (\ref{replacementequality}) and Theorem \ref{monotonicity} again, we get 
        \begin{align*}
            \| \langle \wedge_{j=1}^{m} P(T'_{j} - T^{p}_{j}) \rangle\| &\geq \| \langle \wedge_{j=1}^{m} (T'_{j} - T^{p}_{j})\rangle \|
            = \| \langle \wedge_{j=1}^{m} T'_{j}  \rangle \|.
        \end{align*}
        This completes the proof.
    \end{proof}

\subsection{Demailly's analytic approximation theorem}
    In this section, we recall Demailly's analytic approximation theorem, introduced in \cite{Demailly_regula_11current, Demailly_analyticmethod}. Furthermore, we discuss the convergence of the Monge-Ampère masses of the currents in Demailly's approximation sequence.

    \begin{theorem} 
    \label{demaillyapprotheorem}
    (\cite[Corollary 14.13]{Demailly_analyticmethod})
        Let $\{\theta\}$ be a pseudoeffective class, and $u \in \PSH(X,\theta)$. Then, there exists a sequence $u_{k}^{D} \in \PSH(X, \theta + \epsilon_{k} \omega)$, where $\epsilon_{k}$ decreases to $0$, such that 
        \begin{itemize}
            \item[(1)] $u_{k}^{D} \ge u$ and $u_k^D$ converges to $u$ in $L^1$.
            \item[(2)] $u_{k}^{D}$ has analytic singularities.
            \item[(3)] $\nu(T_k,x)$ converges to $\nu(T,x)$ uniformly on $X$, where $T_k = \ddc u^{D}_k + (\theta + \epsilon_k \omega)$.
        \end{itemize}
    \end{theorem}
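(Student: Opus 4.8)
The plan is to reproduce Demailly's Bergman kernel regularization and keep track of the loss of positivity and of the Lelong numbers. Cover $X$ by finitely many coordinate balls $U_{i}\cong B\subset\C^{n}$ on which $\theta=\ddc\psi_{i}$ with $\psi_{i}$ smooth, so that $\varphi_{i}\defeq u+\psi_{i}$ is psh on $U_{i}$. For an integer $m\geq1$ let $\Hc_{i,m}$ be the Hilbert space of $f\in\mathcal{O}(U_{i})$ with $\int_{U_{i}}|f|^{2}e^{-2m\varphi_{i}}\,d\vol<\infty$, fix an orthonormal basis $(\sigma_{i,m,\ell})_{\ell}$, and set
\[
\varphi_{i,m}(z)\defeq\frac{1}{2m}\log\sum_{\ell}|\sigma_{i,m,\ell}(z)|^{2},\qquad\text{so that}\qquad e^{2m\varphi_{i,m}(z)}=\sup\bigl\{|f(z)|^{2}\;:\;f\in\Hc_{i,m},\ \|f\|\leq1\bigr\}.
\]
Each $\varphi_{i,m}$ has analytic singularities by construction, which is exactly what will give property (2).

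First I would record the two-sided estimates on each chart. Applying the sub-mean value inequality to the plurisubharmonic functions $|f|^{2}$ gives, for every ball $B(z,r)\Subset U_{i}$ and every $f\in\Hc_{i,m}$ with $\|f\|\leq1$, the bound $|f(z)|^{2}\leq\vol(B(z,r))^{-1}e^{2m\sup_{B(z,r)}\varphi_{i}}$, hence $\varphi_{i,m}(z)\leq\sup_{B(z,r)}\varphi_{i}+\tfrac{1}{2m}\log\tfrac{C_{n}}{r^{2n}}$. In the other direction, the Ohsawa--Takegawa $L^{2}$ extension theorem applied with the weight $2m\varphi_{i}$ produces, for each $z_{0}\in U_{i}$, a function $f\in\Hc_{i,m}$ with $f(z_{0})=1$ and $\|f\|^{2}\leq C\,e^{-2m\varphi_{i}(z_{0})}$, the constant $C=C(n,U_{i})$ being independent of $m$; this gives $\varphi_{i,m}\geq\varphi_{i}-\tfrac{\log C}{2m}$ on $U_{i}$. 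Letting $m\to\infty$ and then $r\to0$ in the two bounds shows $\varphi_{i,m}\to\varphi_{i}$ in $L^{1}_{\loc}$. Feeding the lower bound into the definition of the Lelong number gives $\nu(\ddc\varphi_{i,m},x)\leq\nu(\ddc\varphi_{i},x)$, while feeding in the upper bound (taking $r$ comparable to the distance to $x$ and using $\nu(\phi,x)=\lim_{\rho\to0}\max_{|z-x|=\rho}\phi(z)/\log\rho$) gives $\nu(\ddc\varphi_{i,m},x)\geq\nu(\ddc\varphi_{i},x)-n/m$. Thus on each chart $|\nu(\ddc\varphi_{i,m},x)-\nu(\ddc\varphi_{i},x)|\leq n/m$, uniformly in $x$.

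Next I would globalize, following \cite[\S13--14]{Demailly_analyticmethod}. On $U_{i}\cap U_{i'}$ the difference $\psi_{i}-\psi_{i'}$ is smooth, so the local weights $e^{-2m\varphi_{i}}$ are uniformly comparable, and one can patch the local functions $\varphi_{i,m}-\psi_{i}$ by a regularized maximum subordinate to a shrinking of the cover into a single $u_{m}\in\PSH(X,\theta+\epsilon_{m}\omega)$ with analytic singularities; the only cost is a loss of positivity $\epsilon_{m}\downarrow0$ coming from the second-order terms of the transition and cut-off data. From the local estimates, $u_{m}\to u$ in $L^{1}$, $u_{m}\geq u-c_{m}$ with $c_{m}\to0$, and $|\nu(\ddc u_{m},x)-\nu(\ddc u,x)|\leq n/m$ uniformly on $X$ (Lelong numbers are unchanged by the smooth gluing corrections). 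Replacing $u_{m}$ by $u_{m}+c_{m}$ we may assume $u_{m}\geq u$; then $u_{k}^{D}\defeq u_{m_{k}}$, $\epsilon_{k}\defeq\epsilon_{m_{k}}$ along a subsequence $m_{k}\nearrow\infty$ with $\epsilon_{m_{k}}\downarrow0$ satisfies (1), (2) and (3).

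I expect the main obstacle to be the globalization together with the uniformity of the $L^{2}$-extension constant: one must check that the patching loss $\epsilon_{m}$ genuinely tends to $0$ (controlling the $C^{2}$-norms of the transition functions and cut-offs of the cover) and that the Ohsawa--Takegawa constant can be taken independent of $m$ and locally uniform in the base point, since it is precisely this last uniformity that upgrades the pointwise convergence of the Lelong numbers to the uniform convergence on $X$ asserted in (3). The remaining steps are routine bookkeeping with the sub-mean value inequality and the $L^{2}$-extension theorem.
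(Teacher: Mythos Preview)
Your proposal is correct and follows essentially the same route as the paper, which does not give an independent proof but cites \cite[Corollary 14.13]{Demailly_analyticmethod} and then recalls Demailly's construction: local Bergman kernel approximation with the two-sided estimate (sub-mean value for the upper bound, Ohsawa--Takegoshi for the lower bound, yielding $\nu(\varphi,z)-n/k\leq\nu(\varphi_{k},z)\leq\nu(\varphi,z)$), followed by a global gluing with a small loss of positivity $\epsilon_{k}\omega$. The only minor discrepancy is the gluing device: the paper (following Demailly) uses the explicit log-sum formula $u_{k}^{D}=\log\sum_{j}\theta_{j}^{2}\exp(2m(\cdots))$ rather than a regularized maximum; both work, but the log-sum version has the advantage that it manifestly preserves analytic singularities and makes the order-preservation $[v]\preceq[u]\Rightarrow[v_{k}^{D}]\preceq[u_{k}^{D}]$ (the paper's Lemma~\ref{demaillycompare}) immediate from the monotonicity of the Bergman kernel in the weight.
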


    Here, we briefly recall the construction of $u^{D}_{k}$.\\
    \noindent
    \textbf{Step 1. (Local approximation)}
        Let $\varphi$ be a psh function on some bounded pseudoconvex open set $\Omega \subset \C^{n}$. Set $\varphi_{k} \defeq \frac{1}{2k} \log \sum |\sigma_{\ell}|^{2}$, where $\sigma_{\ell}$ is an orthonormal basis of 
        \[
            \mathscr{H}_{\Omega}(k \varphi) \defeq \{f | \text{$f$ is holomorphic on $\Omega$, and $\int_{\Omega} |f|^{2} e^{-2k \varphi}$} < +\infty\}.
        \]
        By \cite[Theorem 14.2]{Demailly_analyticmethod}, there exists $C_{1},C_{2}>0$ independent of $k$ such that 
        \begin{itemize}
            \item[(1)] 
                $\varphi(z) - \frac{C_{1}}{k} \leq \varphi_{k}(z) \leq \sup_{|\xi-z| <r} \varphi(\xi) + \frac{1}{k} \log \frac{C_{2}}{r^{n}}$ for $z \in \Omega$ and $r < d(x , \partial \Omega)$. Hence, $\varphi_{k} \longrightarrow \varphi$ pointwise and in $L_{loc}^{1}$ topology.
            \item[(2)]
                $\nu(\varphi,z)- \frac{n}{k} \leq \nu(\varphi_{k},z) \leq \nu(\varphi,z)$ for $z \in \Omega$.
        \end{itemize}
    
        \noindent
        \textbf{Step 2. (Global approximation)}
        Let $u$ be a $\theta$-psh function. 
        Consider a finite coordinate covering of $X$. Using this covering, we construct a finite collection of open balls $U'_{j}$ of radius $\delta>0$ (with respect to the coordinates of the covering), which also cover $X$. Let $U'_{j} \subset U_{j}'' \subset U_{j} $ be concentric balls of radii $\delta$, $1.5 \delta$ and $2 \delta$ respectively, and let $\tau_{j} \colon U_{j} \rightarrow B(a_{j},2 \delta)$ be the coordinate map. 
        
        Let $\epsilon(\delta)$ be a continuous function in $\delta$ such that $\lim_{\delta \rightarrow 0} \epsilon(\delta)=0$ and $\theta_{x}-\theta_{x'} \leq \epsilon(\delta) \omega_{x}$ for $x,x' \in U_{j}$. Let $\gamma_{j}$ be the $(1,1)$-form on $B(a_{j},2 \delta)$ with constant coefficient such that $\tau_{j}^{*}(\gamma_{j})$ coincides with $-\theta- \epsilon(\delta)\omega$ at $\tau_{j}^{-1}(a_{j})$. Let $q_{j}$ be the quadratic function on $B(a_{j},2\delta)$ such that $\ddc q_{j} = \gamma_{j}$ and $u_{j} = u \circ \tau_{j}^{-1}-q_{j} \circ \tau_{j}$ on $B(a_{j},2 \delta)$. One can check that $u_{j}$ is a psh function. By \textbf{step 1.}, we get that $u_{j}$ can be approximated by 
        \[
            u_{j,k} = \frac{1}{2k} \log \sum |\sigma_{j,k}|^{2}, \quad \text{$\sigma_{j,k}$ is an orthonormal basis of $\mathscr{H}_{U_{j}}(k u_{j})$}.
        \]
        Using the gluing technique from \cite[Lemma 14.11]{Demailly_analyticmethod}, the function $u_{j,k}+ q_{j} \circ \tau_{j}$ can be glued together to form the function 
        \begin{equation}
        \label{gluingequation}
            u^{D}_{k} = \log \sum_{j} \theta_{j}^{2} \exp (2m(u_{j,k}+ q_{j} \circ \tau_{j} + \frac{C_{1}}{m}+ C_{3} \epsilon(\delta)(\delta^{2}/2-|\tau_{j}|^{2}))),
        \end{equation}
        where $0 \leq \theta_{j} \leq 1$ is the cut off function supported on $U''_{j}$ and $\theta_{j}=1$ on $U'_{j}$. We refer readers to \cite[p. 138--142 ]{Demailly_analyticmethod} for further details.

        \begin{remark}
        \label{Bergmanremark}
            The sum $\Sigma |\sigma_{\ell}|$  defined in \textbf{step 1.} is called the Bergman kernel function associated with $\mathscr{H}_{\Omega}(k \varphi)$, which can also be expressed as 
            \[
                \sup_{f \in B(1)} |f(z)|^{2},
            \]
            where $B(1)$ is the unit ball of $\mathscr{H}_{\Omega}(k \varphi)$.
        \end{remark}

        The approximation theorem preserves the ordering of singularity types, as stated below.

        \begin{lemma}
        \label{demaillycompare}
            Let $\{\theta\}$ be a pseudoeffective class, and let $u,v \in \PSH(X,\theta)$ such that $[v] \preceq [u]$. Let $u_{k}^{D}, v_{k}^{D} \in \PSH(X,\theta+ \epsilon_{k} \omega)$ be the approximation sequences of $u$ and $v$ given by Theorem \ref{demaillyapprotheorem}. Then, we have  $[v^{D}_{k}] \preceq [u_{k}^{D}]$ for each $k$.
        \end{lemma}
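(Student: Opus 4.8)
The plan is to trace the inequality defining $[v]\preceq[u]$, namely $v\le u+C_{0}$ for some constant $C_{0}>0$, through the two steps of Demailly's construction and to observe that it survives each stage, yielding $v_{k}^{D}\le u_{k}^{D}+C_{k}$ for some constant $C_{k}$ (which we are free to let depend on $k$, since the claim is for each fixed $k$).

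First I would record the key local statement: if $\varphi,\psi$ are psh on a bounded pseudoconvex $\Omega\subset\C^{n}$ with $\varphi\le\psi+C$, then the Bergman-type approximants of \textbf{Step 1} satisfy $\varphi_{k}\le\psi_{k}+C$. Indeed $\varphi\le\psi+C$ gives $e^{-2k\psi}\le e^{2kC}e^{-2k\varphi}$, so for holomorphic $f$ one has $\int_{\Omega}|f|^{2}e^{-2k\psi}\le e^{2kC}\int_{\Omega}|f|^{2}e^{-2k\varphi}$; hence $f\in\mathscr{H}_{\Omega}(k\varphi)$ implies $f\in\mathscr{H}_{\Omega}(k\psi)$ with $\|f\|_{k\psi}\le e^{kC}\|f\|_{k\varphi}$. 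Using the description of the Bergman kernel from Remark \ref{Bergmanremark} as $\sup\{|f(z)|^{2}:\ \|f\|_{k\varphi}\le 1\}$, every competitor $f$ for $\varphi$ yields, after rescaling by $e^{-kC}$, a competitor for $\psi$, so $\sum_{\ell}|\sigma^{\varphi}_{\ell}(z)|^{2}\le e^{2kC}\sum_{\ell}|\sigma^{\psi}_{\ell}(z)|^{2}$ and therefore $\varphi_{k}=\tfrac{1}{2k}\log\sum_{\ell}|\sigma^{\varphi}_{\ell}|^{2}\le C+\psi_{k}$, with exactly the same constant.

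Next I would feed this into \textbf{Step 2}. All the auxiliary data there — the covering $\{U'_{j}\subset U''_{j}\subset U_{j}\}$, the coordinate maps $\tau_{j}$, the quadratic functions $q_{j}$, the cut-offs $\theta_{j}$, and $\epsilon(\delta)$ — depend only on $X$, $\omega$ and $\{\theta\}$, hence are identical for $u$ and $v$. Since $v\le u+C_{0}$ on $X$, on each $B(a_{j},2\delta)$ we get $v_{j}=v\circ\tau_{j}^{-1}-q_{j}\circ\tau_{j}\le u_{j}+C_{0}$, and the local statement above gives $v_{j,k}\le u_{j,k}+C_{0}$ for every $j$ (one constant works, and in any case there are finitely many $j$). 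Plugging this into the gluing formula (\ref{gluingequation}), where the local potentials enter only through the terms $\exp\!\big(2m(u_{j,k}+q_{j}\circ\tau_{j}+\tfrac{C_{1}}{m}+C_{3}\epsilon(\delta)(\delta^{2}/2-|\tau_{j}|^{2}))\big)$ summed and then composed with $\log$, all other factors being unchanged, one gets the termwise bound $\exp\!\big(2m(v_{j,k}+\cdots)\big)\le e^{2mC_{0}}\exp\!\big(2m(u_{j,k}+\cdots)\big)$, hence $v_{k}^{D}\le u_{k}^{D}+2mC_{0}$. This is precisely $[v_{k}^{D}]\preceq[u_{k}^{D}]$.

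The argument is essentially bookkeeping; the only point that needs care is the monotonicity of the Bergman kernel under a bounded change of weight, and for that the $\sup$-characterization in Remark \ref{Bergmanremark} is exactly what is required. I do not anticipate a genuine obstacle, but one should double-check the normalization in (\ref{gluingequation}) so that the additive error is genuinely bounded (it is $2mC_{0}$ for the stated formula), and note that it is harmless that this bound grows with $k$, since the conclusion is only asserted for each fixed $k$.
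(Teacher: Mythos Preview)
Your proof is correct and follows essentially the same approach as the paper: both use the sup-characterization of the Bergman kernel (Remark \ref{Bergmanremark}) to obtain the local monotonicity $v_{j,k}\le u_{j,k}+C_{0}$, then push this through the gluing formula (\ref{gluingequation}), whose auxiliary data are independent of the potentials. The only cosmetic difference is that the paper first normalizes to $v\le u$ (replacing $v$ by $v-C_{0}$, which shifts $v_{k}^{D}$ by a constant and hence does not affect $[v_{k}^{D}]$) and thereby avoids tracking the constant through the argument; your version with the explicit $2mC_{0}$ bound is equally valid.
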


        \begin{proof}
            Without loss of generality, we may assume that $v \leq u$. Note that the local potentials $u_{j}$ and $v_{j}$ of $u$ and $v$ on $B(a_{j},2\delta)$ (constructed as above) still satisfies $v_{j} \leq u_{j}$. By Remark \ref{Bergmanremark}, we get that $v_{j,k} \leq u_{j,k}$ for each $k$. The constants $C_{1},C_{3}$ in (\ref{gluingequation}) can be chosen to be independent of $u$ and $v$. Combining this with the fact that $v_{j,k} \leq u_{j,k}$, we get $v^{D}_{k} \leq u^{D}_{k}$.
        \end{proof}

   The Monge-{A}mp\`ere masses of the sequence $\{u_{k}^{D}\}$ has the following convergence property.

    \begin{proposition}    
    (\cite[Proposition 3.4]{Darvas_Xia-volumelinebundle})
    \label{darvasconvergencethm}
        Let $\{\theta\}$ be a pseudoeffective class, and let $u \in \PSH(X,\theta)$. Then
        \[
            \int_{X}  \big{\langle} (\theta+ \epsilon_{k} \omega)_{u^{D}_{k}}^{n} \big{\rangle} \searrow \int_{X}  \big{\langle}(\theta)_{P^{\theta}[u]_{\mathcal{I}}}^{n} \big{\rangle} \mbox{, as $k \longrightarrow \infty$}.
        \]
    \end{proposition}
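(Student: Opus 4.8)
\emph{Strategy.} The plan is to establish the two inequalities
$\int_X\langle\theta_{P^\theta[u]_{\mathcal I}}^n\rangle \le \int_X\langle(\theta+\epsilon_k\omega)_{u^D_k}^n\rangle$ for every $k$ and
$\limsup_k \int_X\langle(\theta+\epsilon_k\omega)_{u^D_k}^n\rangle \le \int_X\langle\theta_{P^\theta[u]_{\mathcal I}}^n\rangle$, and then to read the monotone (non-increasing) behaviour off the construction. Two ingredients are used throughout. (i) For a closed positive current with analytic singularities, its total non-pluripolar self-intersection mass is unchanged by passing to the $\mathcal I$-model envelope (a result of the Darvas--Xia circle); since each $u^D_k$ has analytic singularities (Theorem \ref{demaillyapprotheorem}(2)), this gives $\int_X\langle(\theta+\epsilon_k\omega)_{u^D_k}^n\rangle = \int_X\langle(\theta+\epsilon_k\omega)_{P^{\theta+\epsilon_k\omega}[u^D_k]_{\mathcal I}}^n\rangle$. (ii) Monotonicity of the non-pluripolar mass (Theorem \ref{monotonicity}): a less singular potential carries at least as much mass; moreover enlarging the class by a smooth semipositive form can only increase the mass (expand the product -- the mixed terms with the smooth form are genuine wedge products, hence nonnegative).

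\emph{Lower bound.} Since $u^D_k\ge u$ by Theorem \ref{demaillyapprotheorem}(1), we have $\mathcal I(tu)\subseteq\mathcal I(tu^D_k)$ for all $t\ge 0$, hence $\mathcal I(t\,P^\theta[u]_{\mathcal I})\subseteq\mathcal I(tu)\subseteq\mathcal I(tu^D_k)$. As $P^\theta[u]_{\mathcal I}\le 0$ is $\theta$-psh, hence $(\theta+\epsilon_k\omega)$-psh, it competes in the envelope defining $P^{\theta+\epsilon_k\omega}[u^D_k]_{\mathcal I}$, so $P^{\theta+\epsilon_k\omega}[u^D_k]_{\mathcal I}\ge P^\theta[u]_{\mathcal I}$. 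With (i) and (ii),
\[
   \int_X\langle(\theta+\epsilon_k\omega)_{u^D_k}^n\rangle = \int_X\langle(\theta+\epsilon_k\omega)_{P^{\theta+\epsilon_k\omega}[u^D_k]_{\mathcal I}}^n\rangle \ge \int_X\langle(\theta+\epsilon_k\omega)_{P^\theta[u]_{\mathcal I}}^n\rangle \ge \int_X\langle\theta_{P^\theta[u]_{\mathcal I}}^n\rangle .
\]

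\emph{Upper bound, the crux.} Using the Bergman-kernel description of $u^D_k$ (Remark \ref{Bergmanremark}) and Demailly's local $L^2$-estimates of Step 1 -- the same mechanism that forces the uniform convergence of Lelong numbers in Theorem \ref{demaillyapprotheorem}(3) -- one relates the multiplier ideals of the glued potential to those of $u$: after absorbing the quadratic corrections $q_j\circ\tau_j$, the cut-offs and the $\epsilon(\delta)$-terms of (\ref{gluingequation}) into a class perturbation $O(\epsilon_k)\omega$, one produces $\lambda_k\nearrow 1$ with $\mathcal I(tu^D_k)\subseteq\mathcal I(\lambda_k t\,u)$ for all $t\ge 0$. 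Then $\tfrac1{\lambda_k}P^{\theta+\epsilon_k\omega}[u^D_k]_{\mathcal I}$ is, up to a further vanishing class perturbation, a competitor for $P^{\theta+\epsilon'_k\omega}[u]_{\mathcal I}$ with $\epsilon'_k\to 0$, so $P^{\theta+\epsilon_k\omega}[u^D_k]_{\mathcal I}\le \lambda_k P^{\theta+\epsilon'_k\omega}[u]_{\mathcal I}$, and by (i)--(ii),
\[
   \int_X\langle(\theta+\epsilon_k\omega)_{u^D_k}^n\rangle \le \int_X\langle(\theta+\epsilon_k\omega)_{\lambda_k P^{\theta+\epsilon'_k\omega}[u]_{\mathcal I}}^n\rangle .
\]
As $k\to\infty$ one has $\lambda_k\to 1$, $\epsilon_k,\epsilon'_k\to 0$, and $\lambda_k P^{\theta+\epsilon'_k\omega}[u]_{\mathcal I}\searrow P^\theta[u]_{\mathcal I}$ (the $\mathcal I$-envelopes decrease to $P^\theta[u]_{\mathcal I}$ as the class shrinks, by the definition together with the strong openness of multiplier ideals). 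That the right-hand side converges to $\int_X\langle\theta_{P^\theta[u]_{\mathcal I}}^n\rangle$ is where the $\mathcal I$-model hypothesis is essential: mass can be lost along a general decreasing family, and it is precisely the continuity of the $\mathcal I$-model volume $\{\eta\}\mapsto\int_X\langle\eta_{P^\eta[u]_{\mathcal I}}^n\rangle$ as $\{\eta\}$ decreases to the big class $\{\theta\}$ (together with $\lambda_k\to 1$) that prevents it. Combining the two bounds, $\int_X\langle(\theta+\epsilon_k\omega)_{u^D_k}^n\rangle\to\int_X\langle\theta_{P^\theta[u]_{\mathcal I}}^n\rangle$.

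\emph{Monotonicity and the main difficulty.} Passing to a larger index makes the weight $ku_j$ heavier, shrinking the Bergman spaces $\mathscr H_{U_j}(ku_j)$ and decreasing the associated Bergman kernels, so $u^D_k$ becomes more singular up to the uniformly controlled corrections of (\ref{gluingequation}) and the class perturbation by $\epsilon_k\omega$; feeding this into Theorem \ref{monotonicity} upgrades the convergence to the monotone statement $\searrow$. The genuine obstacle is the upper bound: proving the multiplier-ideal comparison $\mathcal I(tu^D_k)\subseteq\mathcal I(\lambda_k t\,u)$ for the \emph{glued} potential with loss $\lambda_k\to 1$ (trivial in the reverse direction because $u^D_k\ge u$), and then converting a mere domination of singularity types into convergence of non-pluripolar masses across the degenerating classes $\{\theta+\epsilon_k\omega\}$ -- the step in which the $\mathcal I$-model property is indispensable. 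An alternative route proceeds through the asymptotic multiplier-ideal invariants of $u$ together with a Fujita-type approximation, with estimates of the same nature.
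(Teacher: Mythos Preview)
The paper does not give its own proof of this proposition: it is quoted verbatim from \cite[Proposition~3.4]{Darvas_Xia-volumelinebundle} and used as a black box. The only related argument in the paper is the proof of Lemma~\ref{generalizeddarvasconvergencethm} (the mixed version), and that proof \emph{invokes} Proposition~\ref{darvasconvergencethm} as an input, together with the decreasing-envelope fact $P^{\theta+\epsilon_k\omega}[u^D_k]_{\mathcal I}\searrow P^\theta[u]_{\mathcal I}$ from \cite[Proposition~3.3]{Darvas-Xia} and $[u^D_k]=[P^{\theta+\epsilon_k\omega}[u^D_k]_{\mathcal I}]$ from \cite[Theorem~4.3]{Kim}. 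So there is no independent proof here to compare against.

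On your sketch itself: the lower bound is correct and essentially the argument the paper uses for the ``$\ge$'' direction in Lemma~\ref{generalizeddarvasconvergencethm}. The upper bound is where all the content lies, and you have identified this accurately. However, your key technical claim --- the existence of $\lambda_k\nearrow 1$ with $\mathcal I(tu^D_k)\subseteq\mathcal I(\lambda_k t\,u)$ for \emph{all} $t\ge 0$ --- is not something that falls out of the Bergman-kernel description by ``the same mechanism'' as the Lelong-number convergence. Demailly's construction directly controls $\mathcal I(2k\varphi)$ via the level-$k$ Bergman sections, but passing from that single scale to the uniform-in-$t$ inclusion you need requires subadditivity of multiplier ideals and a careful limiting argument; this is precisely the substance of \cite[Proposition~3.3]{Darvas-Xia} (equivalently, the statement $P^{\theta+\epsilon_k\omega}[u^D_k]_{\mathcal I}\searrow P^\theta[u]_{\mathcal I}$). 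Your paragraph gestures at this but does not supply it, so as written the upper bound remains a restatement of the hard step rather than a proof of it. The final ``monotonicity'' paragraph has the same issue: the claim that larger $k$ makes $u^D_k$ more singular ``up to uniformly controlled corrections'' is again nontrivial across the varying classes $\theta+\epsilon_k\omega$, and is most cleanly obtained \emph{a posteriori} from the convergence once the limit is identified.
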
 
    
    The convergence property above can be generalized to the mixed setting as follows. The result is similar to \cite[Theorem 4.2]{Xiaokounkov}. We present a proof for the reader's convenience.

    \begin{lemma}
    \label{generalizeddarvasconvergencethm}
        Let $\{\theta_{1}\}, \dots , \{\theta_{n}\}$ be big classes. Let $\delta>0$ and let $u_{j} \in \PSH(X,\theta_{j})$ such that $\int_{X} \langle (\theta_{j})_{u_{j}}^{n} \rangle> \delta$ for $1 \leq j \leq n$. Then, we have 
        \[
            \int_{X}  \big{\langle} \wedge_{j=1}^{n}(\theta_{j}+ \epsilon_{k} \omega)_{u^{D}_{j,k}} \big{\rangle}    \searrow \int_{X}\big{\langle}\wedge_{j=1}^{n}(\theta_{j})_{P^{\theta_{j}}[u_{j}]_{\mathcal{I}}} \big{\rangle}\text{, as $k \longrightarrow \infty$},
        \]
        where $u^{D}_{j,k}$ is the approximation sequence of $u_{j}$ given by Theorem \ref{demaillyapprotheorem}.
    \end{lemma}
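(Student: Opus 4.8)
The plan is to reduce the mixed statement to the self-intersection statement of Proposition 2.17 via a polarization-type argument combined with the monotonicity of the non-pluripolar product. First I would record the two easy halves. Monotonicity in $k$: since $u_{j,k}^{D}\searrow u_{j}$ and each $u_{j,k}^{D}\ge u_{j}$ with $\epsilon_{k}\searrow 0$, Theorem \ref{monotonicity} (applied with the parameter currents $(\theta_{j}+\epsilon_{k}\omega)_{u_{j,k}^{D}}$, which decrease in singularity as $k$ grows, together with the fact that the cohomology classes $\{\theta_{j}+\epsilon_{k}\omega\}$ themselves decrease) shows that the left-hand integral is non-increasing in $k$; and since $u_{j,k}^{D}$ is less singular than $P^{\theta_{j}}[u_{j}]_{\mathcal I}$ (because $u_{j,k}^{D}\ge u_{j}$ and the $\mathcal I$-envelope only depends on the singularity type, being the least singular among those with the same multiplier ideals), the left-hand side dominates the right-hand side for every $k$. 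So it remains to prove $\limsup_{k}\int_{X}\langle\wedge_{j=1}^{n}(\theta_{j}+\epsilon_{k}\omega)_{u_{j,k}^{D}}\rangle\le\int_{X}\langle\wedge_{j=1}^{n}(\theta_{j})_{P^{\theta_{j}}[u_{j}]_{\mathcal I}}\rangle$.

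For the upper bound, the idea is to compare the mixed mass with a product of self-intersection masses by a Hölder/mixed-mass inequality. Concretely, for big classes the mixed non-pluripolar mass $\int_{X}\langle(\theta_{1})_{\psi_{1}}\wedge\cdots\wedge(\theta_{n})_{\psi_{n}}\rangle$ is bounded above and below by geometric means of the self-intersection masses $\int_{X}\langle(\theta_{j})_{\psi_{j}}^{n}\rangle$ only up to cohomological constants, so a cruder route is cleaner: introduce the diagonal class $\{\theta\}:=\frac1n\sum_{j}\{\theta_{j}\}$ and the function $\psi:=\frac1n\sum_{j}\psi_{j}$, use that $(\theta)_{\psi}\ge\frac1n\sum_{j}(\theta_{j})_{\psi_{j}}$ is not literally available, so instead I would work class by class: apply Proposition 2.17 to each $u_{j}$ individually to get $\int_{X}\langle(\theta_{j}+\epsilon_{k}\omega)_{u_{j,k}^{D}}^{n}\rangle\searrow\int_{X}\langle(\theta_{j})_{P^{\theta_{j}}[u_{j}]_{\mathcal I}}^{n}\rangle$, and then invoke the continuity/convergence of mixed masses along decreasing-in-capacity sequences, exactly as in the proof of Lemma \ref{replacementlemma2}: since $u_{j,k}^{D}\to u_{j}$ and in fact the relevant envelopes converge, by \cite[Theorem 2.3]{Lu-Darvas-DiNezza-mono} (convergence of non-pluripolar masses under convergence in capacity, given uniform mass bounds) the mixed masses converge to the mixed mass of the limiting currents. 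The uniform mass bound is where the hypothesis $\int_{X}\langle(\theta_{j})_{u_{j}}^{n}\rangle>\delta$ enters: it guarantees each $u_{j}$ (hence each $P^{\theta_{j}}[u_{j}]_{\mathcal I}$) has positive mass, so we stay in the big locus and the mixed masses are controlled.

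The cleanest organization, which I would actually follow: fix $j$ and apply Theorem \ref{demaillyapprotheorem} and Proposition 2.17 to obtain that $\{u_{j,k}^{D}\}_{k}$ is a decreasing sequence whose self-masses converge to the $\mathcal I$-model self-mass; pass to the envelopes $P^{\theta_{j}+\epsilon_{k}\omega}[u_{j,k}^{D}]_{\mathcal I}$, which by \cite[Proposition 2.18]{DarvasXiaconfigu}-type stability have the same self-mass and decrease to $P^{\theta_{j}}[u_{j}]_{\mathcal I}$ (here one uses that analytic singularities make the Demailly potentials themselves $\mathcal I$-good up to the known comparison $\nu(\cdot)\to\nu(\cdot)$ uniformly, item (3) of Theorem \ref{demaillyapprotheorem}); then feed these $n$ decreasing sequences into the multilinear convergence statement. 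The main obstacle I anticipate is precisely this last multilinear step: upgrading the \emph{individual} mass convergences (Proposition 2.17, one class at a time) to convergence of the \emph{mixed} mass. One cannot simply multiply; instead one needs a monotone-convergence argument for mixed non-pluripolar products along simultaneously decreasing sequences with a common uniform mass bound — this is the mixed analogue of \cite[Theorem 2.3]{Lu-Darvas-DiNezza-mono}, and verifying its hypotheses (in particular that the limit currents $(\theta_{j})_{P^{\theta_{j}}[u_{j}]_{\mathcal I}}$ are $\mathcal I$-model, which is automatic, and that all intermediate currents have masses bounded below away from $0$, which is where $\delta$ is used) is the technical heart of the argument. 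Modulo that, the proof is a diagonal extraction plus the two monotonicity bounds already established.
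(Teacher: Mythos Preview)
Your overall architecture matches the paper's: establish the easy inequality by monotonicity, then work for the reverse bound using Proposition~\ref{darvasconvergencethm} on each factor. You also correctly identify the intermediate objects the paper uses, namely the envelopes $P^{\theta_{j}+\epsilon_{k}\omega}[u^{D}_{j,k}]_{\mathcal I}$ (which coincide in singularity type with $u^{D}_{j,k}$ since the latter have analytic singularities) and their decrease to $P^{\theta_{j}}[u_{j}]_{\mathcal I}$.

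The gap is in the upper bound. You reduce it to ``feed these $n$ decreasing sequences into the multilinear convergence statement,'' citing \cite[Theorem~2.3]{Lu-Darvas-DiNezza-mono}. But that result is stated for potentials in a \emph{fixed} class, whereas here both the potentials and the reference forms $\theta_{j}+\epsilon_{k}\omega$ vary with $k$; and its hypothesis of self-mass convergence is precisely what Proposition~\ref{darvasconvergencethm} provides only for the \emph{varying} forms, not for a fixed ambient form. So the black box you want does not apply as stated, and you have not supplied the bridge. You flag this step as ``the technical heart'' but leave it open; that is exactly where the missing idea lies.

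The paper resolves this not via a convergence-in-capacity theorem but by a direct comparison trick from \cite[Lemma~4.3]{Darvas-Lu-DiNezza-singularity-metric}. Since by Proposition~\ref{darvasconvergencethm} the self-mass $\int_X\langle(\theta_j+\epsilon_k\omega)_{u^D_{j,k}}^n\rangle$ exceeds $\int_X\langle(\theta_j+\epsilon_k\omega)_{P^{\theta_j}[u_j]_{\mathcal I}}^n\rangle$ by an amount tending to zero, that lemma produces $b_{j,k}\nearrow\infty$ and auxiliary potentials $\varphi_{j,k}\in\PSH(X,\theta_j+\epsilon_k\omega)$ with
\[
b_{j,k}^{-1}\varphi_{j,k}+(1-b_{j,k}^{-1})u^{D}_{j,k}\;\le\;P^{\theta_j}[u_j]_{\mathcal I}.
\]
Multilinearity plus monotonicity (Theorem~\ref{monotonicity}) then give
\[
\prod_{j=1}^{n}(1-b_{j,k}^{-1})\int_X\big\langle\wedge_{j=1}^n(\theta_j+\epsilon_k\omega)_{u^D_{j,k}}\big\rangle
\;\le\;\int_X\big\langle\wedge_{j=1}^n(\theta_j+\epsilon_k\omega)_{P^{\theta_j}[u_j]_{\mathcal I}}\big\rangle,
\]
and letting $k\to\infty$ yields the desired upper bound. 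This convex-combination device is the concrete mechanism your plan is missing; once you have it, the varying-forms issue disappears because the comparison happens at each fixed $k$. A minor further point: Theorem~\ref{demaillyapprotheorem} as stated gives $u^{D}_{j,k}\ge u_j$ and $L^1$ convergence, not monotonicity in $k$, so your first paragraph's claim that the left side is non-increasing also needs justification (or can simply be dropped, since only the limit is needed).
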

    \begin{proof}
        Since $u_{j,k}^{D}$ is of analytic singularities, we get $[u_{j,k}^{D}]=[P^{\theta_{j}+\epsilon_{k}\omega}[u_{j,k}^{D}]_{\mathcal{I}}]$ (\cite[Theorem 4.3]{Kim}). By \cite[Proposition 3.3]{Darvas-Xia}, we have $P^{\theta_{j}+ \epsilon_{k}\omega}[u_{j,k}^{D}]_{\mathcal{I}} \searrow P^{\theta_{j}}[u_{j}]_{\mathcal{I}}$ as $k \longrightarrow \infty$. Hence,
        \begin{equation*}
            \lim_{k \rightarrow \infty} \int_{X} \langle \wedge_{j=1}^{n}(\theta_{j}+ \epsilon_{k} \omega)_{u^{D}_{j,k}} \rangle= \lim_{k \rightarrow \infty} \int_{X} \langle \wedge_{j=1}^{n}(\theta_{j}+ \epsilon_{k} \omega)_{P^{\theta_{j}+\epsilon_{k}\omega}[u^{D}_{j,k}]_{\mathcal{I}}} \rangle \geq \int_{X} \langle \wedge_{j=1}^{n}(\theta_{j})_{P^{\theta_{j}}[u_{j}]_{\mathcal{I}}} \rangle.
        \end{equation*}
        
        We now prove 
        \begin{equation}
        \label{generalizeddarvasconvergencethmeq}
            \lim_{k \rightarrow \infty} \int_{X} \langle \wedge_{j=1}^{n}(\theta_{j}+ \epsilon_{k} \omega)_{u^{D}_{j,k}} \rangle \leq \int_{X} \langle \wedge_{j=1}^{n}(\theta_{j})_{P^{\theta_{j}}[u_{j}]_{\mathcal{I}}} \rangle.
        \end{equation}
        By \cite[Lemma 4.3]{Darvas-Lu-DiNezza-singularity-metric} and Proposition \ref{darvasconvergencethm}, we can choose
        \[
            b_{j,k} \in \Big{(}1, \Big{(}\frac{\int_{X}\langle(\theta_{j}+\epsilon_{k}\omega)^{n}_{u_{j,k}^{D}}\rangle}{\int_{X}\langle(\theta_{j}+\epsilon_{k}\omega)^{n}_{u_{j,k}^{D}}\rangle-\int_{X}\langle(\theta_{j}+\epsilon_{k}\omega)^{n}_{P^{\theta_{j}}[u_{j}]_{\mathcal{I}}}\rangle}\Big{)}^{\frac{1}{n}}\Big{)},
        \]
        which increases to $\infty$ as $k \longrightarrow \infty$, and $\varphi_{j,k} \in \PSH(X,\theta_{j}+ \epsilon_{k}\omega)$ such that 
        \[
            b_{j,k}^{-1}\varphi_{j,k} + (1-b_{j,k}^{-1})u_{j,k}^{D} \leq P^{\theta_{j}}[u_{j}]_{\mathcal{I}}.
        \]
        By Theorem \ref{monotonicity}, we then have 
        \[
            \prod_{j=1}^{n}(1-b_{j,k}^{-1}) \int_{X} \langle \wedge_{j=1}^{n}(\theta_{j}+ \epsilon_{k} \omega)_{u_{j,k}^{D}} \rangle \leq \int_{X} \langle \wedge_{j=1}^{n} (\theta_{j} + \epsilon_{k} \omega )_{P^{\theta_{j}}[u_{j}]_{\mathcal{I}}} \rangle.       
        \]
        Let $k \longrightarrow \infty$, then we get (\ref{generalizeddarvasconvergencethmeq}). This completes the proof.
    \end{proof}

\section{Proof of the main theorems}

\subsection{Proof of Theorem \ref{maintheorem}}
        \textbf{Step 1.}
        We first focus on the case where $T_{j}$ has analytic singularities and there exists a K\"ahler current $P_{j} \in \{\theta_{j}\}$ with analytic singularities that is more singular than $T_{j}$ for each $j$. We also assume that $V$ is a smooth submanifold of dimension greater than $n-m$.
      
      Let $\epsilon >0$ be such that $P_{j} \geq \epsilon \omega$ for $1 \leq j \leq m$. Set $\widetilde{T}_{j} = P(T_{j} - T_{j}^{p})$, where we recall that $T_{j}^{p} \defeq \mathbbm{1}_{I_{T_{j}}}T_{j}$. Our first step is to reduce the problem to the case where $T_{j}^{p}=0$ for $1\leq j \leq m$. To achieve this, we replace $T_{j}$, $T'_{j}$ and $P_{j}$ with  $\widetilde{T}_{j}$, $T'_{j}-T_{j}^{p}$ and $P_{j}-T_{j}^{p}$, respectively. By Lemma \ref{replacementlemma2}, we obtain 
       \begin{align}
       \label{replacementmassequation}
           & \| \langle \wedge_{j=1}^{m}T_{j} \rangle \|=\| \langle \wedge_{j=1}^{m} \widetilde{T}_{j}\rangle\|,\notag\\
           &\| \langle \wedge_{j=1}^{m} T'_{j} \rangle\|=\| \langle \wedge_{j=1}^{m} (T'_{j} - T_{j}^{p}) \rangle\|,\\
           &\| \langle  \wedge_{j=1}^{m} P_{j} \rangle \|=\| \langle \wedge_{j=1}^{m} (P_{j} - T_{j}^{p}) \rangle\|. \notag
       \end{align}
       Note that the new currents $\widetilde{T}_{j}$, $T'_{j}-T_{j}^{p}$ and $P_{j} - T_{j}^{p}$ satisfy the following properties.

       \begin{proposition}
       \label{replacementproposition}
       For $1\leq j \leq m$, the following hold.
            \begin{itemize}
                \item[(1)]
                    $\widetilde{T}_{j}$ is less singular than $T'_{j}-T_{j}^{p}$ and $P_{j}-T_{j}^{p}$.
                \item[(2)] 
                    $\nu(\widetilde{T}_{j}+ T_{j}^{p} ,x) = \nu (T_{j},x)$.
                \item[(3)]                      
                    $\widetilde{T}_{j}^{p} \defeq \mathbbm{1}_{I_{\widetilde{T}_{j}}}\widetilde{T}_{j}=0$.
            \end{itemize}      
       \end{proposition}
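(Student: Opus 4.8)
The plan is to show that, in the analytic‑singularities setting of Step 1, the difference envelope $\tilde T_j = P(T_j - T_j^p)$ is nothing but the residual current $T_j - T_j^p$ in the Siu decomposition of $T_j$, and then to read all three assertions off from this identification. First I would recall, from the proof of Lemma \ref{replacementlemma2}, that $T_j^p = \mathbbm{1}_{I_{T_j}} T_j = \sum_k \lambda_{j,k}[V_{j,k}]$, where the $V_{j,k}$ are the (pure codimension one) irreducible components of $I_{P_j}$ and $\lambda_{j,k} = \nu(T_j, V_{j,k})$. Since any prime divisor carrying a positive generic Lelong number of $T_j$ lies in $I_{T_j} \subseteq I_{P_j}$, the current $T_j^p$ is exactly the divisorial part of $T_j$; hence $R_j := T_j - T_j^p$ is a closed positive $(1,1)$-current — the residual current — with $\nu(R_j, W) = 0$ for every prime divisor $W \subset X$.

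Next I would observe that $\tilde T_j = R_j$. Writing $T_j = \ddc u_j + \theta_j$ and $T_j^p = \ddc u_j^p + \theta_j^p$, the function $u_j - u_j^p$ (with the convention of Definition \ref{differenceofcurrents}) is a local potential of the positive current $R_j$, so $u_j - u_j^p \in \PSH(X, \theta_j - \theta_j^p)$; and since $P^{\theta}(f) = f$ whenever $f \in \PSH(X,\theta)$ (the function $f$ is both a competitor for, and an upper bound of, the defining supremum), we get $P^{\theta_j - \theta_j^p}(u_j - u_j^p) = u_j - u_j^p$, hence $\tilde T_j = P(T_j - T_j^p) = \ddc(u_j - u_j^p) + (\theta_j - \theta_j^p) = R_j$. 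Granting this, assertion (2) is immediate, as $\tilde T_j + T_j^p = R_j + T_j^p = T_j$, so $\nu(\tilde T_j + T_j^p, x) = \nu(T_j, x)$ for all $x$.

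For (1), I would first check that $T'_j - T^p_j$ and $P_j - T^p_j$ are genuine closed positive currents. The former is Lemma \ref{replacementlemma2}, which gives $T'_j \ge \mathbbm{1}_{I_{T'_j}} T'_j \ge T^p_j$. For the latter, $P_j$ being more singular than $T_j$ gives $\nu(P_j, V_{j,k}) \ge \nu(T_j, V_{j,k}) = \lambda_{j,k}$, so — using Siu's theorem that the generic Lelong number along $V_{j,k}$ is the minimum of the pointwise Lelong numbers there — one can peel the divisor $\sum_k \lambda_{j,k}[V_{j,k}]$ off $P_j - \epsilon\omega$ and stay positive, whence $P_j - T^p_j \ge \epsilon\omega$. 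Their potentials being $u'_j - u^p_j$ and $\varphi_j - u^p_j$ (where $P_j = \ddc\varphi_j + \theta_j$), the assertion ``$\tilde T_j = R_j$ is less singular than $T'_j - T^p_j$'' reduces to $u'_j \le u_j + C$, and ``less singular than $P_j - T^p_j$'' to $\varphi_j \le u_j + C$; both hold because $T_j$ is less singular than $T'_j$ and than $P_j$.

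Finally, for (3): since $T_j$ has analytic singularities and $T^p_j$ is its divisorial part, $R_j = \tilde T_j$ again has analytic singularities (locally $\sum_m |g_m|^2$ factors as $|g|^2\sum_m |\tilde g_m|^2$ with $g = \gcd(g_m)$, and $T^p_j$ absorbs precisely the $|g|^2$-factor), and $R_j$ carries no divisorial part, so $I_{\tilde T_j} = I_{R_j}$ is an analytic set of codimension $\ge 2$. Since a closed positive $(1,1)$-current gives zero mass to analytic sets of codimension $\ge 2$, we conclude $\tilde T^p_j = \mathbbm{1}_{I_{R_j}} R_j = 0$. The step I expect to need the most care is the identification $\tilde T_j = T_j - T^p_j$ — concretely, ensuring that $T^p_j$ is genuinely the \emph{full} divisorial part of $T_j$, so that $T_j - T^p_j \ge 0$ and the subtraction of potentials is legitimate (i.e. $\{u^p_j = -\infty\} \subseteq \{u_j = -\infty\}$, which holds because each $V_{j,k}$ with $\lambda_{j,k} > 0$ carries positive Lelong number of $T_j$ and hence lies in $I_{T_j}$). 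This is exactly the content already extracted from the two support theorems in Lemma \ref{replacementlemma2}, which is why Step 1 is run under the analytic-singularities hypothesis.
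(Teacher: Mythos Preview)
Your strategy --- identify $\widetilde T_j$ with the residual current $R_j = T_j - T_j^p$ in the Siu decomposition and read everything off --- is different from the paper's and, once the identification is in hand, gives a cleaner route: (2) becomes a tautology, (1) reduces to $u'_j \le u_j + C$ and $\varphi_j \le u_j + C$, and (3) follows from the fact that $I_{R_j}$ has codimension $\ge 2$. The paper never makes this identification; for (2) it argues via $[u_j] = [P^{\theta_j}[u_j]]$ (using analytic singularities) to get one inequality and ``$T_j - T_j^p \preceq P(T_j - T_j^p)$'' for the other, and for (3) it shows $P_j - T_j^p$ is a K\"ahler current with analytic singularities and then \emph{re-applies} Lemma~\ref{replacementlemma2} to deduce $0 = \mathbbm{1}_{I_{T_j - T_j^p}}(T_j - T_j^p) \ge \widetilde T_j^{\,p}$.

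There is, however, a real gap in your justification of $\widetilde T_j = R_j$. You assert that ``$u_j - u_j^p$ (with the convention of Definition~\ref{differenceofcurrents}) is a local potential of the positive current $R_j$, so $u_j - u_j^p \in \PSH(X,\theta_j - \theta_j^p)$''. This is not true. On each divisorial component $V_{j,k}$ of $I_{T_j}$ one has $u_j = u_j^p = -\infty$, so the convention forces $(u_j - u_j^p)|_{V_{j,k}} = -\infty$; but the genuine quasi-psh potential $w_j$ of $R_j$ (the usc regularization of $u_j - u_j^p$) is \emph{generically finite} on $V_{j,k}$, since $\nu(R_j,V_{j,k}) = 0$ and $R_j$ has analytic singularities. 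Hence $u_j - u_j^p$ is not upper semicontinuous, is not $(\theta_j - \theta_j^p)$-psh, and the step ``$P^\theta(f) = f$ because $f$ is $\theta$-psh'' fails. What one gets for free is only $\widetilde u_j \le (u_j - u_j^p)^* = w_j$, i.e.\ $\widetilde T_j \preceq R_j$. The reverse inequality (even at the level of singularity types) needs an argument --- for instance, producing competitors $v_t \in \PSH(X,\theta_j - \theta_j^p)$ with $v_t = -\infty$ on $\cup_k V_{j,k}$ and $v_t \nearrow w_j$ a.e., using that $\{\theta_j - \theta_j^p\}$ is big. Note that the paper's own proof invokes ``$T_j - T_j^p \preceq P(T_j - T_j^p)$'' at exactly the same juncture, so the subtlety is shared; but your write-up makes the unjustified step more visible because you claim an \emph{equality} of currents rather than of singularity types. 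If you only establish $[\widetilde T_j] = [R_j]$, your deductions of (1)--(3) still go through verbatim, since Lelong numbers, comparison of singularities, and polar sets depend only on the singularity type.
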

  
       \begin{proof}

            The proof of (1) follows directly from the definition of difference envelope of currents (Definition \ref{differenceofcurrents}). As for (2), we first write $T_{j}$, $T_{j}^{p}$, and $\widetilde{T}_{j}$ as 
                    \begin{align*}
                        &T_{j} = \ddc u_{j} + \theta_{j}, \\
                        &T^{p}_{j} = \ddc u^{p}_{j} + \theta_{j}^{p}, \\
                        & \widetilde{T}_{j} = \ddc \widetilde{u}_{j} + (\theta_{j} - \theta^{p}_{j}),
                    \end{align*}
                    where $\widetilde{u}_{j} = (\sup \{ v \in \PSH(X , \theta_{j} - \theta_{j}^{p}) | v \leq u_{j}-u_{j}^{p}\})^{*}$. Since $u_{j}$ has analytic singularities, the singularity types of $u_{j}$ and $P^{\theta_{j}}[u_{j}]$ are the same (see \cite[Lemma 3.2]{Darvas_Xia-volumelinebundle}), and we obtain
                    \begin{equation*}
                        [u_{j}] = [P^{\theta_{j}}[u_{j}]] \succeq [\widetilde{u}_{j} + u_{j}^{p}],
                    \end{equation*}
                    On the other hands, since 
                    \[
                        T_{j}-T_{j}^{p} \preceq P(T_{j}-T_{j}^{p}) = \widetilde{T}_{j},
                    \]
                    we deduce that $[\widetilde{u}_{j} + u_{j}^{p}] \succeq [u_{j}]$. 
                    Hence, we get $[\widetilde{u}_{j}+u_{j}^{p}]=[u_{j}]$ and
                    \[
                        \nu(\widetilde{T}_{j}+T_{j}^{p},x) = \nu (T_{j},x),
                    \]
                    for $x \in X$.

                    Now, let's move on to (3). Note that $ P_{j}=P_{j}- T_{j}^{p} \geq \epsilon \omega$ on $X \backslash V$, where $V = I_{T_{j}}$ is a hypersurface on $X$. Since $\omega$ is smooth, the above inequality extends to entire $X$. Therefore, we get that $P_{j}-T_{j}^{p}$ is a K\"ahler current with analytic singularities. By applying Lemma \ref{replacementlemma2}, we get 
                    \[
                        0= \mathbbm{1}_{I_{T_{j}-T_{j}^{p}}}(T_{j}-T_{j}^{p}) \geq \mathbbm{1}_{I_{\widetilde{T}_{j}}} \widetilde{T}_{j}.
                    \]
                    Thus, $\mathbbm{1}_{I_{\widetilde{T}_{j}}}\widetilde{T}_{j}=0$.
       \end{proof}
       The equalities (\ref{replacementmassequation}) and  Proposition \ref{replacementproposition} explain why it suffices to prove the theorem with the currents $\widetilde{T}_{j}$, $T'_{j}-T_{j}^{p}$ and $P_{j}-T_{j}^{p}$. For convenience, from now on, we will continuously use $T_{j}$, $T'_{j}$ and $P_{j}$ to represent the currents $\widetilde{T}_{j}$, $T'_{j}-T_{j}^{p}$ and $P_{j}-T_{j}^{p}$, respectively.

      Let $\sigma \colon \widehat{X} \longrightarrow X$ be the blow-up of $X$ along $V$. We denote by $ \widehat{V} = \sigma^{-1}(V)$ the exceptional divisor. In general, the pull back of the K\"ahler form $\omega$ by $\sigma$ is not necessarily a K\"ahler form. However, we can construct a K\"ahler form on $\widehat{X}$ as following. By \cite[Lemma 3.25]{Voisin1}, there exists a closed smooth $(1,1)$-form $\omega_{h}$ that is cohomologous to $-[\widehat{V}]$, and a constant $c_{V}>0$ depends on $V$ such that 
       \begin{equation}
       \label{blowupkahler}
            \widehat{\omega} \defeq c_{V} \sigma^{*} \omega + \omega_{h}>0,
       \end{equation}
       which is a K\"ahler form on $\widehat{X}$. 
       
       For $\delta \in (0,1)$, set $P_{j}^{\delta} \defeq (1-\delta) T_{j} + \delta P_{j}$. Note that $P^{\delta}_{j} \geq \delta \epsilon \omega$ is a K\"ahler current on $X$. Now, we decompose the pull back of $T_{j}$, $T'_{j}$ and $P^{\delta}_{j}$ by $\sigma$ as follows
       \begin{align*}
           &\sigma^{*}T_{j} = \lambda_{j} [ \widehat{V}]+ \eta_{j},\\
           &\sigma^{*}T'_{j} = \lambda'_{j} [\widehat{V}] + \eta'_{j},\\
           &\sigma^{*}P_{j}^{\delta} = \lambda_{j}^{\delta}[\widehat{V}] + \eta_{j}^{\delta}.
       \end{align*}
       Since the Lelong numbers preserved by the blow-up map (\cite[Corollary 1.1.8]{Boucksom-these}), we get 
       \begin{align*}
           \lambda_{j} &= \nu(\sigma^{*}T_{j},\widehat{V})= \nu(T_{j},V),\\
           \lambda'_{j} &= \nu(\sigma^{*}T'_{j},\widehat{V})= \nu(T'_{j},V),\\
           \lambda_{j}^{\delta} &= \nu(\sigma^{*}P_{j}^{\delta}, \widehat{V})= \nu(P_{j}^{\delta},V).
       \end{align*}

       We note that for any closed $(n-m,n-m)$-form $\Phi$, we have
       \[
            \int_{X} \langle \wedge_{j=1}^{m} T_{j}  \rangle \wedge \Phi = \int_{\widehat{X}} \langle \wedge_{j=1}^{m}\eta_{j} \rangle \wedge \sigma^{*} \Phi \mbox{ and } \int_{X} \langle \wedge_{j=1}^{m} T'_{j} \rangle \wedge \Phi = \int_{\widehat{X}} \langle \wedge_{j=1}^{m} \eta'_{j} \rangle \wedge \sigma^{*}\Phi
       \]
       
       \begin{lemma}
       \label{nonpluriclassicallemma}
           Let $\eta_{m}$ be the $(1,1)$-current defined as above. Let $\tau_{1}, \dots, \tau_{m-1}$ be closed positive $(1,1)$-currents on $\widehat{X}$. Then, we have
           \[
                \langle \wedge_{j=1}^{m-1} \tau_{j} \wedge \eta_{m} \rangle = \langle \wedge_{j=1}^{m-1} \tau_{j}  \dot{\wedge} \eta_{m} \rangle.
           \]
           
       \end{lemma}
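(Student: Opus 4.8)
The plan is to exploit the fact that $\eta_m$ was obtained by subtracting the divisorial Lelong contribution along $\widehat V$ from $\sigma^* T_m$, so its potential is \emph{locally bounded} outside the strict transform structure — more precisely, after the reduction carried out in Step 1, $T_m$ has $\mathbbm 1_{I_{T_m}} T_m = 0$, which forces $\eta_m$ to put no mass on the pluripolar locus in the relevant sense. The point of the lemma is that the non-pluripolar product $\langle \wedge_{j=1}^{m-1}\tau_j \wedge \eta_m\rangle$, in which $\eta_m$ is treated as one of the $(1,1)$-factors, coincides with the relative product $\langle \wedge_{j=1}^{m-1}\tau_j \,\dot\wedge\, \eta_m\rangle$, in which $\eta_m$ plays the role of the base current $T$ of bidegree $(1,1)$. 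I would first recall the two definitions side by side: the relative product is the weak limit of $\mathbbm 1_{\cap_j\{v_j>-k\}}\wedge_{j=1}^{m-1}(\ddc\max\{v_j,-k\}+\theta_j)\wedge \eta_m$, where $v_j$ is the potential of $\tau_j$, while the full non-pluripolar product cuts \emph{all} $m$ potentials, including that of $\eta_m$.

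The key observation is that the potential $\psi_m$ of $\eta_m$ has pole set $I_{\eta_m}$ with $\mathbbm 1_{I_{\eta_m}}\eta_m = 0$; that is, $\eta_m$ does not charge its own polar locus. Hence by Proposition \ref{pro-sublinearnonpluripolar}(2) applied with the complete pluripolar set $A = I_{\eta_m}$ (or directly with $A=\cup_j I_{\tau_j}\cup I_{\eta_m}$), the relative product is unchanged if we replace $\eta_m$ by $\mathbbm 1_{X\setminus A}\eta_m$. Concretely, I would argue: cutting the extra potential $\psi_m$ at level $-k$ modifies $R_k$ only on $\{\psi_m \le -k\}$; since $\mathbbm 1_{I_{\eta_m}}\eta_m=0$ and the masses $\langle\wedge_{j=1}^{m-1}(\ddc\max\{v_j,-k\}+\theta_j)\wedge\eta_m\rangle$ are uniformly bounded (Lemma 3.4 of \cite{Viet-generalized-nonpluri}), the contribution of $\{\psi_m\le -k\}$ tends to $0$ as $k\to\infty$, so the two limits agree. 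This is essentially an instance of the general principle that the non-pluripolar product does not charge complete pluripolar sets, combined with the absence of divisorial mass for $\eta_m$.

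The main obstacle I anticipate is the bookkeeping needed to justify that the ``extra cut'' on $\psi_m$ does not lose mass in the limit: one must show that $\mathbbm 1_{\{\psi_m \le -k\}}\,\langle\wedge_{j=1}^{m-1}(\ddc\max\{v_j,-k\}+\theta_j)\rangle \wedge \eta_m \to 0$ weakly, which requires controlling the interaction between the truncated factors $\tau_j$ and the base current $\eta_m$ near the polar set of $\eta_m$. I would handle this by invoking the plurifine locality of the non-pluripolar product together with the fact that, on $\{\psi_m > -k\}$, both expressions literally coincide, so the difference is supported on a decreasing family of sets whose intersection is a complete pluripolar set on which $\eta_m$ carries no mass (after the Step 1 reduction); applying Proposition \ref{pro-sublinearnonpluripolar}(2) then closes the argument. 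A short remark should be added that the identity is exactly what lets us later pass freely between the ``classical'' wedge bookkeeping on $\widehat X$ and the relative product, which is why it is isolated here.
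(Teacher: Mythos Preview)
Your proposal identifies the same pivot as the paper: the equality reduces to showing $\mathbbm{1}_{I_{\eta_m}}\eta_m=0$, and then one invokes the general fact that the full non-pluripolar product agrees with the relative one when the base current does not charge its own polar set. The paper does this in two lines by citing \cite[Proposition~3.6]{Viet-generalized-nonpluri} directly, whereas you propose to reprove that proposition by hand via the truncation sequences and Proposition~\ref{pro-sublinearnonpluripolar}(2); that works but is unnecessary overhead.

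The one substantive gap is that you \emph{assert} $\mathbbm{1}_{I_{\eta_m}}\eta_m=0$ as a consequence of $T_m^p=\mathbbm{1}_{I_{T_m}}T_m=0$ without saying why. This is not automatic: $\eta_m$ lives on the blow-up $\widehat X$, and one has to transport the vanishing from $X$ to $\widehat X$. The paper's argument is short but not vacuous: since $\nu(\eta_m,\widehat V)=0$ by construction, $\mathbbm{1}_{I_{\eta_m}}\eta_m=\mathbbm{1}_{I_{\eta_m}\setminus\widehat V}\eta_m$; away from $\widehat V$ the map $\sigma$ is an isomorphism and $\eta_m=\sigma^*T_m$ there, so
\[
\mathbbm{1}_{I_{\eta_m}\setminus\widehat V}\,\eta_m \;\le\; \sigma^*\big(\mathbbm{1}_{\sigma(I_{\eta_m})}T_m\big)\;\le\;\sigma^*\big(\mathbbm{1}_{I_{T_m}}T_m\big)=0.
\]
You should make this step explicit; once it is in place, your outline and the paper's proof coincide.
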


       \begin{proof}
           By \cite[Proposition 3.6]{Viet-generalized-nonpluri}, it suffices to show that $\mathbbm{1}_{I_{\eta_{m}}}\eta_{m}=0$. Note that 
           \begin{align*}
               \mathbbm{1}_{I_{\eta_{m}}} \eta_{m} &= \mathbbm{1}_{I_{\eta_{m}} \backslash \widehat{V}} \eta_{m}\\
               &\leq \sigma^{*} (\mathbbm{1}_{\sigma (I_{\eta_{m}})} T_{m})\\
               &\leq \sigma^{*} (\mathbbm{1}_{I_{T_{m}}} T_{m}).
           \end{align*}
           Since we assume $T_{m}^{p}=\mathbbm{1}_{I_{T_{m}}}T_{m}=0$, it follows that $\mathbbm{1}_{I_{\eta_{m}}}\eta_{m}=0$, which completes the proof.
       \end{proof}

       Note that $T_{j}$ is less singular than $P_{j}^{\delta}$, hence
       \[
           \sigma^{*}P_{j}^{\delta}=\eta_{j}^{\delta} + \lambda^{\delta}_{j}[\widehat{V}] \preceq  
           \sigma^{*}T_{j}.
       \]
       This implies 
       \begin{equation}
       \label{replaceinequality}
           \eta_{j}^{\delta}  \preceq P((\sigma^{*}T_{j})-\lambda^{\delta}_{j}[\widehat{V}]).
       \end{equation}
       Set 
       \[
            Q_{j}^{\delta} \defeq \eta_{j}^{\delta}+ \frac{\delta\epsilon}{ 2 c_{V}} \omega_{h} \geq \frac{\delta \epsilon}{2 c_{V}} \widehat{\omega}, \quad \widetilde{\eta}_{j}^{\delta} \defeq P(P(\sigma^{*}T_{j}- \lambda^{\delta}_{j}[\widehat{V}]) -\frac{\delta \epsilon}{2 c_{V}}[\widehat{V}]).
       \]

       \begin{lemma}
       \label{qcomparelemma}
            The currents $ Q^{\delta}_{j}$ is more singular than $\widetilde{\eta}^{\delta}_{j}$.
       \end{lemma}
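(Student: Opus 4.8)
The plan is to compute explicit quasi-psh potentials for both currents with respect to a common smooth reference form in their (shared) cohomology class, and then read off the desired singularity comparison directly from the definition of the difference envelope together with the already-established relation \eqref{replaceinequality}.

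First I would fix reference data for $\widehat V$: since $\omega_h$ is cohomologous to $-[\widehat V]$, the Lelong--Poincar\'e formula applied to the canonical section of $\mathcal O(\widehat V)$ equipped with a smooth metric of curvature $-\omega_h$ produces a quasi-psh function $\psi$ on $\widehat X$, bounded above and normalized so that $\psi\le 0$, with $[\widehat V]=\ddc\psi-\omega_h$. Writing $T_j=\ddc u_j+\theta_j$ and $P_j^{\delta}=\ddc v_j+\theta_j$, set $\beta_j\defeq \sigma^*\theta_j+\lambda_j^{\delta}\omega_h$ and $\beta_j'\defeq \beta_j+\tfrac{\delta\epsilon}{2c_V}\omega_h$. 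By Definition \ref{differenceofcurrents}, $P(\sigma^*T_j-\lambda_j^{\delta}[\widehat V])=\ddc s+\beta_j$ with $s\defeq P^{\beta_j}(\sigma^*u_j-\lambda_j^{\delta}\psi)$, and hence $\widetilde\eta_j^{\delta}=\ddc w+\beta_j'$ with $w\defeq P^{\beta_j'}(s-\tfrac{\delta\epsilon}{2c_V}\psi)$. On the other hand $\eta_j^{\delta}=\sigma^*P_j^{\delta}-\lambda_j^{\delta}[\widehat V]=\ddc e+\beta_j$ with $e\defeq \sigma^*v_j-\lambda_j^{\delta}\psi$, so that $Q_j^{\delta}=\eta_j^{\delta}+\tfrac{\delta\epsilon}{2c_V}\omega_h=\ddc e+\beta_j'$; in particular $e\in\PSH(\widehat X,\beta_j')$ since $\ddc e+\beta_j'=Q_j^{\delta}\ge \tfrac{\delta\epsilon}{2c_V}\widehat\omega\ge 0$.

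Now \eqref{replaceinequality} says exactly that $\eta_j^{\delta}$ is more singular than $P(\sigma^*T_j-\lambda_j^{\delta}[\widehat V])$, i.e. $e\le s+C$ for some constant $C$. Since $\psi\le 0$, we get $e-C\le s\le s-\tfrac{\delta\epsilon}{2c_V}\psi$, so $e-C$ is a $\beta_j'$-psh function dominated by $s-\tfrac{\delta\epsilon}{2c_V}\psi$, hence a competitor in the envelope defining $w$; therefore $e-C\le w$, and $Q_j^{\delta}=\ddc(e-C)+\beta_j'\preceq \ddc w+\beta_j'=\widetilde\eta_j^{\delta}$, which is the assertion. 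Equivalently, one may phrase the conclusion as: $Q_j^{\delta}$ is a closed positive current whose potential is, up to a constant, bounded above by that of $S-\tfrac{\delta\epsilon}{2c_V}[\widehat V]$, hence $Q_j^{\delta}\preceq P(S-\tfrac{\delta\epsilon}{2c_V}[\widehat V])=\widetilde\eta_j^{\delta}$ by maximality of the difference envelope.

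I expect no genuine obstacle here beyond bookkeeping. The only two points that need care are: (i) that $Q_j^{\delta}$ and $\widetilde\eta_j^{\delta}$ lie in the same cohomology class, which follows from $\{\omega_h\}=-\{[\widehat V]\}$ and $P_j^{\delta},T_j\in\{\theta_j\}$, both being cohomologous to $\sigma^*\{\theta_j\}-\big(\lambda_j^{\delta}+\tfrac{\delta\epsilon}{2c_V}\big)\{[\widehat V]\}$ (so that the comparison of singularity types is meaningful); and (ii) that $\psi$ is bounded above, which is precisely what allows the term $-\tfrac{\delta\epsilon}{2c_V}\psi$ to be discarded when testing the envelope inequality, while the admissibility of $e$ as a $\beta_j'$-potential uses only that $Q_j^{\delta}$ is a Kähler current on $\widehat X$. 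The substantive input — that the residual part $\eta_j^{\delta}$ is dominated in singularity by $P(\sigma^*T_j-\lambda_j^{\delta}[\widehat V])$ — is already supplied by \eqref{replaceinequality}.
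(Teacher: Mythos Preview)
Your proof is correct and follows essentially the same approach as the paper's: both arguments write explicit quasi-psh potentials for $Q_j^{\delta}$ and $\widetilde{\eta}_j^{\delta}$ with respect to the common reference form $\sigma^*\theta_j+(\lambda_j^{\delta}+\tfrac{\delta\epsilon}{2c_V})\omega_h$, invoke \eqref{replaceinequality} to get $e\le s+C$ (in your notation), use the normalization $\psi\le 0$ to pass to $e-C\le s-\tfrac{\delta\epsilon}{2c_V}\psi$, and conclude via the defining property of the difference envelope. Your write-up is slightly more explicit in the bookkeeping (in particular the verification that $e\in\PSH(\widehat X,\beta_j')$ and that both currents lie in the same class), but the logical content is identical.
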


       \begin{proof}
           First, we write
           \begin{align*}
               \eta_{j} &= \ddc u_{\eta_{j}} + \sigma^{*}\theta_{j}+ \lambda_{j}\omega_{h},& u_{\eta_{j}}&\in \PSH(\widehat{X},\sigma^{*}\theta_{j}+\lambda_{j}\omega_{h}),\\
               \eta_{j}^{\delta}&=\ddc u_{j}^{\delta}+ \sigma^{*}\theta_{j} + \lambda_{j}^{\delta} \omega_{h},& u_{j}^{\delta} &\in \PSH(\widehat{X},\sigma^{*}\theta_{j} + \lambda_{j}^{\delta} \omega_{h}),\\
               P(\sigma^{*}T_{j}- \lambda_{j}^{\delta}[\widehat{V}])&= \ddc u_{j}^{p} + \sigma^{*}\theta_{j} + \lambda_{j}^{\delta} \omega_{h},&  u_{j}^{p} &\in \PSH(\widehat{X}, \sigma^{*}\theta_{j} + \lambda_{j}^{\delta} \omega_{h}),\\
               [\widehat{V}]&=\ddc u_{\widehat{V}} - \omega_{h} ,& u_{\widehat{V}} &\in \PSH(\widehat{X}, - \omega_{h}).
           \end{align*}
           We normalize the potential function $u_{\widehat{V}}$ so that $u_{\widehat{V}} \leq 0$. Note that $u_{j}^{\delta}$ can also represent the potential of $Q_{j}^{\delta}$. In other words, $u_{j}^{\delta} \in \PSH(X,\sigma^{*}\theta_{j} + (\lambda_{j}^{\delta} +\frac{\delta\epsilon}{2 c_{V}})\omega_{h})$ such that 
           \[
                Q_{j}^{\delta} = \ddc u_{j}^{\delta} + \sigma^{*}\theta_{j} + (\lambda_{j}^{\delta}  +\frac{\delta\epsilon}{2 c_{V}})\omega_{h}.
           \]
           By (\ref{replaceinequality}) and the fact that $u_{\widehat{V}} \leq 0$, we obtain
            \[
                [u_{j}^{\delta} +  \frac{\delta\epsilon}{2c_{V}} u_{\widehat{V}}] \preceq [u_{j}^{\delta}]  \preceq [u_{j}^{p}],
           \] 
           which implies $Q_{j}^{\delta} \preceq P(P(\sigma^{*}T_{j}- \lambda^{\delta}_{j}[\widehat{V}]) -\frac{\delta \epsilon}{2 c_{V}}[\widehat{V}])=\widetilde{\eta}_{j}^{\delta}$.
       \end{proof}

       We prove Theorem \ref{maintheorem} by contradiction. Suppose $\lambda'_{j}=\nu(T'_{j},V) > \nu(T_{j},V) = \lambda_{j}$ for $1 \leq j \leq m$. For each $j$, take $\delta_{j}>0$ small enough such that $\lambda'_{j} - \lambda^{\delta_{j}}_{j} -\frac{\delta_{j}\epsilon}{2 c_{V}}>0$. Here, we provide a precise method for choosing $\delta_{j}$. First, note that 
       \begin{align*}
           \lambda'_{j}-\lambda^{\delta_{j}}_{j}-\frac{\delta_{j}\epsilon}{2c_{V}}=(\lambda'_{j}-\lambda_{j})- \delta_{j}\big{(}(\lambda^{P_{j}}-\lambda_{j})-\frac{\epsilon}{2c_{V}} \big{)},
       \end{align*}
       where $\lambda^{P_{j}} \defeq \nu(\sigma^{*}P_{j},\widehat{V})=\nu(P_{j},V)$. Since $P_{j}$ is a current in $\{\theta_{j}\}$, we get $\lambda^{P_{j}} \leq c\|\{\theta_{j}\}\|$ for some constant $c$ independent of the class $\{\theta_{j}\}$. Combining this with the equality above, we can  choose 
       \begin{equation}
       \label{deltachoosing}
            \delta_{j}=(c \|\{\theta_{j}\}\|+\frac{\epsilon}{2c_{V}})^{-1}\frac{\lambda'_{j}-\lambda_{j}}{2}.
       \end{equation}
     
       \begin{lemma}
       \label{compareremark}
           $\sigma^{*}T'_{j}-(\lambda^{\delta_{j}}_{j}+\frac{\delta_{j}\epsilon}{2c_{V}})[\widehat{V}]$ is a closed positive current and is more singular than $\widetilde{\eta}^{\delta_{j}}_{j}$.
       \end{lemma}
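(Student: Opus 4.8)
The plan is to prove the two assertions of Lemma~\ref{compareremark} in turn --- positivity first, and then that $\sigma^{*}T'_{j}-(\lambda^{\delta_{j}}_{j}+\tfrac{\delta_{j}\epsilon}{2c_{V}})[\widehat{V}]$ is more singular than $\widetilde{\eta}^{\delta_{j}}_{j}$, the second part following the pattern of Lemma~\ref{qcomparelemma}.

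For positivity I would use the Siu decomposition $\sigma^{*}T'_{j}=\lambda'_{j}[\widehat{V}]+\eta'_{j}$ with $\eta'_{j}\geq 0$ and $\lambda'_{j}=\nu(\sigma^{*}T'_{j},\widehat{V})=\nu(T'_{j},V)$, which gives
\[
    \sigma^{*}T'_{j}-\Bigl(\lambda^{\delta_{j}}_{j}+\tfrac{\delta_{j}\epsilon}{2c_{V}}\Bigr)[\widehat{V}]=\Bigl(\lambda'_{j}-\lambda^{\delta_{j}}_{j}-\tfrac{\delta_{j}\epsilon}{2c_{V}}\Bigr)[\widehat{V}]+\eta'_{j}.
\]
By the identity displayed just before (\ref{deltachoosing}), the bound $\lambda^{P_{j}}\leq c\|\{\theta_{j}\}\|$, and the calibrated choice (\ref{deltachoosing}) of $\delta_{j}$, the coefficient $\lambda'_{j}-\lambda^{\delta_{j}}_{j}-\tfrac{\delta_{j}\epsilon}{2c_{V}}$ is strictly positive. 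Hence the current is a sum of closed positive currents, so it is closed and positive; in particular $\lambda'_{j}>\lambda^{\delta_{j}}_{j}$, which I record for use below.

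For the singularity comparison, since $T_{j}$ is less singular than $T'_{j}$ and $\sigma$ is a bimeromorphic modification, $\sigma^{*}T_{j}$ is less singular than $\sigma^{*}T'_{j}$. Subtracting $\lambda^{\delta_{j}}_{j}[\widehat{V}]$ from both --- the left-hand current $\sigma^{*}T'_{j}-\lambda^{\delta_{j}}_{j}[\widehat{V}]$ being positive because $\lambda'_{j}>\lambda^{\delta_{j}}_{j}$ --- its potential is quasi-psh and, modulo an additive constant, dominated by that of $\sigma^{*}T_{j}-\lambda^{\delta_{j}}_{j}[\widehat{V}]$, so it competes in the envelope defining $P(\sigma^{*}T_{j}-\lambda^{\delta_{j}}_{j}[\widehat{V}])$; this yields $\sigma^{*}T'_{j}-\lambda^{\delta_{j}}_{j}[\widehat{V}]\preceq P(\sigma^{*}T_{j}-\lambda^{\delta_{j}}_{j}[\widehat{V}])$, the analogue of (\ref{replaceinequality}). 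Subtracting $\tfrac{\delta_{j}\epsilon}{2c_{V}}[\widehat{V}]$ next amounts to subtracting $\tfrac{\delta_{j}\epsilon}{2c_{V}}u_{\widehat{V}}$ from the two potentials (with $u_{\widehat{V}}\leq 0$ normalized as in Lemma~\ref{qcomparelemma}), which preserves the ordering; thus the potential of $\sigma^{*}T'_{j}-(\lambda^{\delta_{j}}_{j}+\tfrac{\delta_{j}\epsilon}{2c_{V}})[\widehat{V}]$ is, up to a constant, at most that of $P(\sigma^{*}T_{j}-\lambda^{\delta_{j}}_{j}[\widehat{V}])-\tfrac{\delta_{j}\epsilon}{2c_{V}}[\widehat{V}]$, and by the positivity step it is quasi-psh for the reference form $\sigma^{*}\theta_{j}+(\lambda^{\delta_{j}}_{j}+\tfrac{\delta_{j}\epsilon}{2c_{V}})\omega_{h}$. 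Hence it is admissible in the supremum defining $\widetilde{\eta}^{\delta_{j}}_{j}=P\bigl(P(\sigma^{*}T_{j}-\lambda^{\delta_{j}}_{j}[\widehat{V}])-\tfrac{\delta_{j}\epsilon}{2c_{V}}[\widehat{V}]\bigr)$, and we obtain $\sigma^{*}T'_{j}-(\lambda^{\delta_{j}}_{j}+\tfrac{\delta_{j}\epsilon}{2c_{V}})[\widehat{V}]\preceq\widetilde{\eta}^{\delta_{j}}_{j}$.

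The one genuinely delicate point, and the main obstacle, is ensuring that each removal of a positive multiple of $[\widehat{V}]$ lands on an honestly positive current before the envelope/competitor arguments are invoked; this is precisely what the choice (\ref{deltachoosing}) of $\delta_{j}$ secures, since it forces $\lambda'_{j}-\lambda^{\delta_{j}}_{j}-\tfrac{\delta_{j}\epsilon}{2c_{V}}>0$. The rest is the same bookkeeping of reference forms $\sigma^{*}\theta_{j}+\lambda\,\omega_{h}$ as in Lemma~\ref{qcomparelemma}.
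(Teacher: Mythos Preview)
Your proof is correct and follows essentially the same route as the paper's: first deduce positivity from the Siu decomposition $\sigma^{*}T'_{j}=\lambda'_{j}[\widehat{V}]+\eta'_{j}$ together with the choice of $\delta_{j}$ ensuring $\lambda'_{j}-\lambda^{\delta_{j}}_{j}-\tfrac{\delta_{j}\epsilon}{2c_{V}}>0$, then use $T'_{j}\preceq T_{j}$ to get $\sigma^{*}T'_{j}-\lambda^{\delta_{j}}_{j}[\widehat{V}]\preceq P(\sigma^{*}T_{j}-\lambda^{\delta_{j}}_{j}[\widehat{V}])$, and finally subtract $\tfrac{\delta_{j}\epsilon}{2c_{V}}[\widehat{V}]$ and pass to the second envelope. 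Your write-up is somewhat more explicit about the competitor mechanism and the bookkeeping of reference forms, but the argument is the same as the paper's ``similar process as above''.
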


       \begin{proof}
           Since $T_{j}$ is less singular than $T'_{j}$, we get 
           \[
                \sigma^{*}T'_{j} - \lambda^{\delta_{j}}_{j}[\widehat{V}] \preceq P(\sigma^{*}T_{j} - \lambda^{\delta_{j}}_{j}[\widehat{V}]).
           \]
           Note that $ \sigma^{*}T'_{j} - \lambda^{\delta_{j}}_{j}[\widehat{V}]= \eta'_{j} + (\lambda'_{j}- \lambda^{\delta_{j}}_{j})[\widehat{V}]$ is a closed positive current. This is because $\lambda'_{j}-\lambda^{\delta_{j}}_{j}>\lambda'_{j}-\lambda^{\delta_{j}}_{j}-\frac{\delta_{j}\epsilon}{2c_{V}}>0$. By the similar process as above, we obtain that  $\sigma^{*}T'_{j}-(\lambda^{\delta_{j}}_{j}+\frac{\delta_{j}\epsilon}{2c_{V}})[\widehat{V}] = \eta'_{j}+ (\lambda'_{j} - \lambda^{\delta_{j}}_{j}-\frac{\delta_{j}\epsilon}{2c_{V}})[\widehat{V}]$ is a closed positive current, and satisfied 
           \[
                \sigma^{*}T'_{j}-(\lambda^{\delta_{j}}_{j}+\frac{\delta_{j}\epsilon}{2c})[\widehat{V}] \preceq P(P(\sigma^{*}T_{j}- \lambda^{\delta_{j}}_{j}[\widehat{V}]) -\frac{\delta_{j} \epsilon}{2 c_{V}}[\widehat{V}]) = \widetilde{\eta}^{\delta_{j}}_{j}.
           \]
       \end{proof}

       \begin{lemma}
       \label{mainlemma}
       We have the following inequality.
       \[
            \{ \langle\wedge_{j=1}^{m} \eta'_{j} \rangle \} +\{(\lambda'_{m}-\lambda_{m})\langle \wedge_{j=1}^{m-1} \widetilde{\eta}^{\delta_{j}}_{j} \dot{\wedge} [\widehat{V}] \rangle \} \leq \{\langle\wedge_{j=1}^{m}P[\sigma^{*}T_{j}] \rangle \}.
       \]

       \end{lemma}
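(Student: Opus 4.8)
The plan is to peel off the top current $T_m$ (equivalently $\eta_m$) from the intersection product and treat it separately via the blow-up geometry, then reduce the remaining $m-1$ currents to envelopes using monotonicity. First I would use Lemma \ref{nonpluriclassicallemma} to replace the ordinary wedge by the relative non-pluripolar product $\langle \wedge_{j=1}^{m-1}\eta'_j \dot\wedge \eta_m\rangle$ wherever $\eta_m$ (or a variant) appears, so that Proposition \ref{pro-sublinearnonpluripolar} is available. The key structural input is the decomposition $\sigma^* T_j = \lambda_j[\widehat V]+\eta_j$ together with the inequalities $\eta_j^{\delta_j}\preceq P(\sigma^*T_j-\lambda_j^{\delta_j}[\widehat V])$ from (\ref{replaceinequality}) and the chain of refinements through $\widetilde\eta_j^{\delta_j}$ supplied by Lemmas \ref{qcomparelemma} and \ref{compareremark}. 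Since $\sigma^*T'_j = \eta'_j + \lambda'_j[\widehat V]$ and $\lambda'_j-\lambda_j^{\delta_j}-\tfrac{\delta_j\epsilon}{2c_V}>0$ by the choice (\ref{deltachoosing}) of $\delta_j$, we may write
\[
\sigma^*T'_j = \eta'_j + \Big(\lambda'_j-\lambda_j^{\delta_j}-\tfrac{\delta_j\epsilon}{2c_V}\Big)[\widehat V] + \Big(\lambda_j^{\delta_j}+\tfrac{\delta_j\epsilon}{2c_V}\Big)[\widehat V],
\]
and Lemma \ref{compareremark} says the first two summands form a positive current more singular than $\widetilde\eta_j^{\delta_j}$.

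The heart of the argument is then a multilinearity-plus-monotonicity estimate. I would expand $\langle\wedge_{j=1}^m \sigma^*T'_j\rangle$ — or rather a carefully chosen piece of it — by multilinearity of the non-pluripolar product, singling out the term where each of the factors $j=1,\dots,m-1$ contributes its $[\widehat V]$-free positive part dominated by $\widetilde\eta_j^{\delta_j}$ and the last factor $j=m$ contributes $[\widehat V]$; this produces the cross term
\[
\prod_{j=1}^{m-1}\Big(\lambda'_j-\lambda_j^{\delta_j}-\tfrac{\delta_j\epsilon}{2c_V}\Big)\ \Big\langle \wedge_{j=1}^{m-1}\widetilde\eta_j^{\delta_j}\dot\wedge [\widehat V]\Big\rangle
\]
together with $\langle\wedge_{j=1}^m\eta'_j\rangle$ itself (taking all the $\eta'_j$ parts) as two of the summands; all remaining summands are positive, hence can be dropped. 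By the monotonicity theorem (Theorem \ref{monotonicity}), in cohomology the whole sum is bounded above by $\{\langle\wedge_{j=1}^m P[\sigma^*T_j]\rangle\}$, because $\sigma^*T_j$ is less singular than $\sigma^*T'_j$ and $P[\sigma^*T_j]$ is the least singular current in its class with the model property. One subtlety is that the lemma as stated carries the coefficient $(\lambda'_m-\lambda_m)$ only on the exceptional term for the index $m$, not $\prod_{j=1}^{m-1}(\cdots)$; I would match this by noting $\lambda'_j-\lambda_j^{\delta_j}-\tfrac{\delta_j\epsilon}{2c_V}\ge$ a fixed positive multiple of $\lambda'_j-\lambda_j$ for $j<m$ (again by (\ref{deltachoosing})) and, more importantly, by reorganizing the expansion so that it is $\eta_m$ (with exact Lelong coefficient $\lambda_m$) that gets stripped, leaving $(\lambda'_m-\lambda_m)[\widehat V]$ as the surviving exceptional factor for $j=m$ while the first $m-1$ indices are handled by $\widetilde\eta_j^{\delta_j}$.

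The main obstacle I anticipate is bookkeeping in the multilinear expansion: one must choose, for each index $j$, the right two-term splitting of $\sigma^*T'_j$ (positive part versus exceptional part), discard exactly the right positive cross terms, and verify that the terms kept are genuinely comparable — in the sense of singularity type and positivity — to $\widetilde\eta_j^{\delta_j}\dot\wedge[\widehat V]$ and to $P[\sigma^*T_j]$, so that Theorem \ref{monotonicity} applies term-by-term. Here Lemma \ref{nonpluriclassicallemma} is essential to license passing between $\wedge$ and $\dot\wedge$ with $[\widehat V]$ (or $\eta_m$) as the distinguished factor, and Proposition \ref{pro-sublinearnonpluripolar}(1) lets one regroup the relative product associatively. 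A second, smaller technical point is ensuring that $P(\,\cdot\,)$ commutes appropriately with $\sigma^*$ and with subtraction of $[\widehat V]$, which is exactly the content already packaged in Lemmas \ref{qcomparelemma} and \ref{compareremark}; I would invoke those directly rather than reprove them. Once the expansion is organized, the inequality falls out by applying monotonicity and then dropping nonnegative terms.
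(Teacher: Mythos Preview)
Your list of ingredients is right --- Lemmas \ref{compareremark} and \ref{nonpluriclassicallemma}, monotonicity, and linearity of the relative product in its distinguished slot are exactly what the proof uses. But the organization you propose, a single multilinear expansion of (a piece of) $\langle\wedge_j\sigma^*T'_j\rangle$ in which both left-hand terms appear as summands and the rest is dropped, breaks down at the cross term. In any such expansion the factor in slot $j<m$ is built from $\eta'_j$ (possibly with some $[\widehat V]$ added), and by Lemma \ref{compareremark} that current is \emph{more} singular than $\widetilde\eta_j^{\delta_j}$; monotonicity (Theorem \ref{monotonicity}) then gives
\[
\big\{\langle\wedge_{j<m}(\eta'_j+a_j[\widehat V])\dot\wedge[\widehat V]\rangle\big\}\ \le\ \big\{\langle\wedge_{j<m}\widetilde\eta_j^{\delta_j}\dot\wedge[\widehat V]\rangle\big\},
\]
which is the wrong direction: the candidate summand is \emph{smaller} in cohomology than the $(\lambda'_m-\lambda_m)$-term you need to absorb, so ``drop the positive remainder'' cannot recover it. (The product coefficient $\prod_{j<m}(\lambda'_j-\lambda_j^{\delta_j}-\tfrac{\delta_j\epsilon}{2c_V})$ you wrote is a symptom of the same confusion --- it belongs to the summand where every slot $j<m$ contributes $[\widehat V]$, not to a summand with factors $\widetilde\eta_j^{\delta_j}$.)

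The paper's proof is not an expansion but a directed chain of inequalities. It first upgrades the slots $j<m$ in the $\{\langle\wedge_j\eta'_j\rangle\}$-term from $\eta'_j$ to $\widetilde\eta_j^{\delta_j}$: add $(\lambda'_j-\lambda_j^{\delta_j}-\tfrac{\delta_j\epsilon}{2c_V})[\widehat V]$ and apply monotonicity via Lemma \ref{compareremark}, landing on $\{\langle\wedge_{j<m}\widetilde\eta_j^{\delta_j}\dot\wedge\eta'_m\rangle\}$. Only now do the first $m-1$ slots of the two left-hand terms agree, and linearity in the $\dot\wedge$-slot merges them into $\{\langle\wedge_{j<m}\widetilde\eta_j^{\delta_j}\dot\wedge(\eta'_m+(\lambda'_m-\lambda_m)[\widehat V])\rangle\}$. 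Since $\eta'_m+(\lambda'_m-\lambda_m)[\widehat V]=\sigma^*T'_m-\lambda_m[\widehat V]$ is more singular than $\eta_m$, one more application of Theorem \ref{monotonicity} followed by Lemma \ref{nonpluriclassicallemma} gives $\{\langle\wedge_{j<m}\widetilde\eta_j^{\delta_j}\wedge\eta_m\rangle\}$, after which the comparison with $P[\sigma^*T_j]$ proceeds essentially as you described. The point you are missing is that the replacement $\eta'_j\rightsquigarrow\widetilde\eta_j^{\delta_j}$ for $j<m$ must be carried out \emph{before} peeling off $[\widehat V]$ in slot $m$, not as part of a single expansion.
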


       \begin{proof}
            By the multi-linearity and the monotonicity of non-pluripolar product (\cite[Proposition 1.4]{BEGZ} and Theorem \ref{monotonicity}),
            we obtain
            \begin{align*}
                 &\{ \langle\wedge_{j=1}^{m} \eta'_{j} \rangle \} +\{(\lambda'_{m}-\lambda_{m})\langle \wedge_{j=1}^{m-1} \widetilde{\eta}^{\delta_{j}}_{j} \dot{\wedge} [\widehat{V}] \rangle \}\\
                 \leq &\{\langle \wedge_{j=1}^{m-1} (\eta'_{j}+(\lambda'_{j}-\lambda^{\delta_{j}}_{j}-\frac{\delta_{j}\epsilon}{2c_{V}})[\widehat{V}])
                \dot{\wedge} \eta'_{m}\rangle\}+\{(\lambda'_{m}-\lambda_{m})\langle \wedge_{j=1}^{m-1} \widetilde{\eta}^{\delta_{j}}_{j} \dot{\wedge} [\widehat{V}] \rangle \}\\
                \leq &\{\langle \wedge_{j=1}^{m-1} \widetilde{\eta}^{\delta_{j}}_{j} \dot{\wedge} \eta'_{m} \rangle\}+\{(\lambda'_{m}-\lambda_{m})\langle \wedge_{j=1}^{m-1} \widetilde{\eta}^{\delta_{j}}_{j} \dot{\wedge} [\widehat{V}] \rangle \}\mbox{ (by Lemma \ref{compareremark})}\\
                = &\{\langle \wedge_{j=1}^{m-1} \widetilde{\eta}^{\delta_{j}}_{j} \dot{\wedge} (\eta'_{m}+ (\lambda'_{m} - \lambda_{m})[\widehat{V}]) \rangle\}\\
                \leq &\{\langle \wedge_{j=1}^{m-1} \widetilde{\eta}^{\delta_{j}}_{j} \dot{\wedge} \eta_{m} \rangle\} \\ 
                = &\{\langle \wedge_{j=1}^{m-1} \widetilde{\eta}^{\delta_{j}}_{j} \wedge \eta_{m} \rangle \}\mbox{ (by Lemma \ref{nonpluriclassicallemma})}
            \end{align*}

            Now, we show that $\{\langle \wedge_{j=1}^{m-1} \widetilde{\eta}^{\delta_{j}}_{j} \wedge \eta_{m} \rangle\} \leq \{\langle\wedge_{j=1}^{m}P[\sigma^{*}T_{j}] \rangle \}$. Let 
            \[                              \widetilde{\theta}^{\delta_{j}}_{j}=\sigma^{*}\theta_{j} + (\lambda_{j}^{\delta_{j}} +\frac{\delta_{j}\epsilon}{2 c_{V}})\omega_{h}.
            \]
            We write $\widetilde{\eta}^{\delta_{j}}_{j}=\ddc \widetilde{u}^{\delta_{j}}_{j}+\widetilde{\theta}^{\delta_{j}}_{j}$, $\widetilde{u}^{\delta_{j}}_{j} \in \PSH(\widehat{X},\widetilde{\theta}^{\delta_{j}}_{j})$, and recall that $P(\sigma^{*}T_{j}- \lambda^{\delta_{j}}_{j}[\widehat{V}])=\ddc u_{j}^{p} + (\widetilde{\theta}^{\delta_{j}}_{j}-\frac{\delta_{j}\epsilon}{2c_{V}}\omega_{h})$, where
            \begin{align*}
                &\widetilde{u}^{\delta_{j}}_{j} = \big{(}\sup\{v \in \PSH(\widehat{X},\widetilde{\theta}^{\delta_{j}}_{j}) |v + \frac{\delta_{j}\epsilon}{2c_{V}}u_{\widehat{V}} \leq u^{p}_{j}\}\big{)}^{*},\\
                &u^{p}_{j} = (\sup\{v \in \PSH(\widehat{X},\widetilde{\theta}^{\delta_{j}}_{j}-\frac{\delta_{j}\epsilon}{2c_{V}}\omega_{h}) | v+\lambda^{\delta_{j}}_{j}u_{\widehat{V}} \leq \sigma^{*}u_{j}\})^{*}.
            \end{align*}
            One observes that $[\widetilde{u}^{\delta_{j}}_{j}+ \frac{\delta_{j} \epsilon}{ 2 c_{V}} u_{\widehat{V}}] \preceq [P[u^{p}_{j}]]$ and $[u^{p}_{j}+ \lambda^{\delta_{j}}_{j}u_{\widehat{V}}] \preceq [P[\sigma^{*}u_{j}]]$. This implies 
            \begin{equation*}
            \label{comparelemmaineq}
                [\widetilde{u}^{\delta_{j}}_{j} + (\frac{\delta_{j} \epsilon}{2c_{V}}+ \lambda^{\delta_{j}}_{j})u_{\widehat{V}}] \preceq [P[u^{p}_{j}]+ \lambda^{\delta_{j}}_{j}u_{\widehat{V}}] \preceq [P[u^{p}_{j}+ \lambda^{\delta_{j}}_{j} u_{\widehat{V}}]] \preceq [P[P[\sigma^{*}u_{j}]]]=[P[\sigma^{*}u_{j}]].
            \end{equation*}
            In other words, $\widetilde{\eta}^{\delta_{j}}_{j}+(\frac{\delta_{j}\epsilon}{2c_{V}}+\lambda^{\delta_{j}}_{j})[\widehat{V}] \preceq P[\sigma^{*}T_{j}]$.
            By the multi-linearity and the monotonicity of non-pluripolar product again, we get 
            \begin{align*}
                \{\langle \wedge_{j=1}^{m-1} \widetilde{\eta}^{\delta_{j}}_{j} \wedge \eta_{m} \rangle \} &\leq \{\langle \wedge_{j=1}^{m-1}(\widetilde{\eta}^{\delta_{j}}_{j}+(\frac{\delta_{j}\epsilon}{2c_{V}}+\lambda^{\delta_{j}}_{j})[\widehat{V}]) \wedge \eta_{m} \rangle\}\\
                &\leq\{\langle\wedge_{j=1}^{m}P[\sigma^{*}T_{j}] \rangle \}
            \end{align*}

       \end{proof}
       By Lemma \ref{mainlemma}, Lemma \ref{qcomparelemma} and \cite[Theorem 3.14]{Lu-Darvas-DiNezza-mono}, we now have 
       \begin{align}
           &\int_{X} \langle\wedge_{j=1}^{m} T'_{j} \rangle \wedge \omega^{n-m}\notag\\ 
           =&\int_{\widehat{X}} \langle \wedge_{j=1}^{m} \eta'_{j} \rangle \wedge (\sigma^{*}\omega)^{n-m}\notag\\
           \leq &\int_{\widehat{X}} \langle \wedge_{j=1}^{m} P [\sigma^{*}T_{j}] \rangle \wedge (\sigma^{*}\omega)^{n-m}-(\lambda'_{m}-\lambda_{m}) \int_{\widehat{X}}\langle \wedge_{j=1}^{m-1} \widetilde{\eta}^{\delta_{j}}_{j} \dot{\wedge} [\widehat{V}] \rangle \wedge (\sigma^{*}\omega)^{n-m}\notag\\
           = &\int_{\widehat{X}} \langle \wedge_{j=1}^{m} \sigma^{*} T_{j} \rangle \wedge (\sigma^{*}\omega)^{n-m}-(\lambda'_{m}-\lambda_{m}) \int_{\widehat{X}}\langle \wedge_{j=1}^{m-1} \widetilde{\eta}^{\delta_{j}}_{j} \dot{\wedge} [\widehat{V}] \rangle \wedge (\sigma^{*}\omega)^{n-m}\notag\\
           \leq&\int_{X} \langle \wedge_{j=1}^{m}  T_{j} \rangle \wedge \omega^{n-m}-(\lambda'_{m}-\lambda_{m}) \int_{\widehat{X}}\langle \wedge_{j=1}^{m-1} Q^{\delta_{j}}_{j} \dot{\wedge} [\widehat{V}] \rangle \wedge (\sigma^{*}\omega)^{n-m}\label{inequ1}
       \end{align}

                
        Recall that $Q^{\delta_{j}}_{j} = \eta^{\delta_{j}}_{j} + \frac{\delta_{j} \epsilon}{ 2 c_{V}} \omega_{h} \geq \frac{\delta_{j} \epsilon}{2c_{V}} \widehat{\omega}$ and $\sigma^{*}P^{\delta_{j}}_{j} = \lambda^{\delta_{j}}_{j} [\widehat{V}]+ \eta^{\delta_{j}}_{j}$. Since $P^{\delta_{j}}_{j}$ is of analytic singularities, so do $\eta^{\delta_{j}}_{j}$ and $Q^{\delta_{j}}_{j}$. This combines with the fact that $[\widehat{V}] \not\subset I_{\eta^{\delta_{j}}_{j}}$, induces that $[\widehat{V}]$ has no mass on $I_{Q^{\delta_{j}}_{j}}$. Therefore, we can apply \cite[Proposition 3.5]{Viet-generalized-nonpluri} and get 
            \begin{align}
                \int_{\widehat{X}} \langle \wedge_{j=1}^{m-1} Q_{j}^{\delta_{j}} \dot{\wedge} [\widehat{V}] \rangle \wedge (\sigma^{*}\omega)^{n-m} &\geq \prod_{j=1}^{m-1}\big{(}\frac{\delta_{j} \epsilon}{2c_{V}}\big{)}\int_{\widehat{X}} \langle \widehat{\omega}^{m-1} \dot{\wedge} [\widehat{V}] \rangle \wedge (\sigma^{*}\omega)^{n-m} \notag\\ 
                &= \prod_{j=1}^{m-1}\big{(}\frac{\delta_{j} \epsilon}{2c_{V}}\big{)}\int_{\widehat{X}}  [\widehat{V}]  \wedge (\sigma^{*}\omega)^{n-m} \wedge \widehat{\omega}^{m-1} \neq 0\label{inequ3}
            \end{align}
            Combining (\ref{inequ1}) and (\ref{inequ3}), we get 
            \begin{equation}
            \label{inequ4}
                \int_{X} \langle \wedge_{j=1}^{m} T'_{j} \rangle \wedge \omega^{n-m} \leq \int_{X} \langle \wedge_{j=1}^{m} T_{j} \rangle \wedge \omega^{n-m} -(\lambda'_{m}- \lambda_{m})  \prod_{j=1}^{m-1}\big{(}\frac{\delta_{j} \epsilon}{2c_{V}}\big{)} \int_{\widehat{X}}  [\widehat{V}]  \wedge (\sigma^{*}\omega)^{n-m} \wedge \widehat{\omega}^{m-1}.
            \end{equation}
            \\
            \noindent
            \textbf{Step 2.}
            Now, we remove the analytic singularities assumption of $T_{j}$ and $P_{j}$. Here, we note that the existence of the K\"ahler current $P_{j} \preceq T_{j} $ follows from \cite[Proposition 3.6]{Darvas_Xia-volumelinebundle}. We apply Demailly's approximation theorem (Theorem \ref{demaillyapprotheorem}) on the potentials of $T_{j}$, $T'_{j}$  and $P_{j}$ ($u_{j}$, $u'_{j}$ and $p_{j}$), and get sequences $u^{D}_{j,k},u'^{D}_{j,k},p^{D}_{j,k} \in \PSH(X,\theta_{j}+ \epsilon_{k} \omega)$, where $\epsilon_{k}$ decreases to $0$ such that 
        \begin{itemize}
            \item[(1)] $u_{j,k}^{D} \searrow u_{j}$, $u'^{D}_{j,k} \searrow u'_{j}$ and $p_{j,k}^{D} \searrow p_{j}$.
            \item[(2)] $u^{D}_{j,k}$, $u'^{D}_{j,k}$ and $p_{j,k}^{D}$ have analytic singularities.
            \item[(3)] $\nu(T^{D}_{j,k},x) \longrightarrow \nu(T_{j},x)$, $\nu(T'^{D}_{j,k},x) \longrightarrow \nu(T'_{j},x)$ and $\nu(P^{D}_{j,k},x) \longrightarrow \nu(P_{j},x)$ uniformly on $X$, where 
            \begin{align*}
                T^{D}_{j,k} = \ddc u^{D}_{j,k} + (\theta_{j} + \epsilon_k \omega)\\
                T'^{D}_{j,k} = \ddc u'^{D}_{j,k}+(\theta_{j}+\epsilon_{k} \omega)\\
                P^{D}_{j,k} = \ddc p^{D}_{j,k}+ (\theta_{j}+\epsilon_{k} \omega)
            \end{align*}
        \end{itemize}
        
        By Lemma \ref{demaillycompare}, the ordering of the singularity types is preserved after applying Demailly's approximation theorem. To be more precise, for each $k \in \N$, we have $[u^{D}_{j,k}] \succeq [u'^{D}_{j,k}],[p^{D}_{j,k}]$. Since $P_{j}$ is a K\"ahler current for each $j$, there exists $\epsilon>0$ such that $P_{j} \geq \epsilon \omega$ for all $j$. By the construction of $P^{D}_{j,k}$, one sees that $P^{D}_{j,k}$ is also a K\"ahler current and satisfied $P^{D}_{j,k} \geq (\epsilon-\epsilon_{k}) \omega$.

        For $1\leq j \leq m$, set $\lambda_{j,k} \defeq \nu (T^{D}_{j,k},V)$, $\lambda'_{j,k} \defeq \nu(T'^{D}_{j,k},V)$, and let 
        \begin{equation*}
            \delta_{j,k}=(c\|\{\theta_{j}+ \epsilon_{k}\omega\}\|+\frac{\epsilon}{2c_{V}})^{-1}\frac{\lambda'_{j,k}-\lambda_{j,k}}{2},
       \end{equation*}
       where $c_{V},c$ are constants in (\ref{deltachoosing}).
        
        Now, we apply (\ref{inequ4}) in \textbf{step 1.}, then we obtain
        \begin{align}
                &\int_{X} \langle \wedge_{j=1}^{m} T'^{D}_{j,k} \rangle \wedge \omega^{n-m} \notag\\ 
                \leq
                &\int_{X} \langle \wedge_{j=1}^{m} T^{D}_{j,k} \rangle \wedge \omega^{n-m} -(\lambda'_{m,k}- \lambda_{m,k})  \prod_{j=1}^{m-1}\big{(}\frac{\delta_{j,k} (\epsilon-\epsilon_{k})}{2c_{V}}\big{)}\int_{\widehat{X}}  [\widehat{V}]  \wedge (\sigma^{*}\omega)^{n-m} \wedge \widehat{\omega}^{m-1} \label{inequ5}
        \end{align}

       By the monotonicity property of non-pluripolar product (Theorem \ref{monotonicity}), we have 
       \begin{equation}
           \int_{X} \langle \wedge_{j=1}^{m} T'_{j} \rangle \wedge \omega^{n-m} \leq \int_{X} \langle \wedge_{j=1}^{m} T'^{D}_{j,k} \rangle \wedge \omega^{n-m} \label{inequ6}.
       \end{equation}
        Since we assume $T_{j}$ is $\mathcal{I}$-model for $j = 1 ,\dots , m$, Lemma \ref{generalizeddarvasconvergencethm} induces 
        \begin{align}  
            \int_{X}\langle  \wedge_{j=1}^{m}T^{D}_{j,k} \rangle \wedge \omega^{n-m}  
            &= \int_{X} \langle \wedge_{j=1}^{m} (\theta_{j}+ \epsilon_{k} \omega)_{u_{j,k}^{D}} \rangle \wedge \omega^{n-m} \notag\\
            &\searrow \int_{X} \langle \wedge_{j=1}^{m}(\theta_{j})_{P[u_{j}]_{\mathcal{I}}} \rangle \wedge \omega^{n-m}= \int_{X} \langle \wedge_{j=1}^{m} T_{j} \rangle \wedge \omega^{n-m}, \mbox{ $k \longrightarrow \infty$}. \label{inequ7}
        \end{align}

        Combining (\ref{inequ5}), (\ref{inequ6}), (\ref{inequ7}) and let $k \longrightarrow \infty$. Then, we get (\ref{inequ4})
        \begin{equation*}
        \label{inequ8}
            \int_{X} \langle \wedge_{j=1}^{m} T'_{j} \rangle  \wedge \omega^{n-m}\leq \int_{X} \langle \wedge_{j=1}^{m}T_{j}\rangle\wedge\omega^{n-m} -(\lambda'_{m}- \lambda_{m})  \prod_{j=1}^{m-1}\big{(}\frac{\delta_{j} \epsilon}{2c}\big{)}^{m-1} \int_{\widehat{X}}  [\widehat{V}]  \wedge (\sigma^{*}\omega)^{n-m} \wedge \widehat{\omega}^{m-1}
        \end{equation*}
        for the general case. Finally, since we assume that $T'_{1}, \dots , T'_{m}$ is of relative full mass intersection with respect to $T_{1}, \dots , T_{m}$, the inequality above does not hold and this make a contradiction. Therefore, $\lambda'_{j}-\lambda_{j}=\nu(T'_{j},V)-\nu(T_{j},V)=0$ for some $j =1 , \dots , m$.
        \\

        \noindent
        \textbf{Step 3.}
        We get rid of the assumption that $V$ is smooth. By using Hironaka's desingularization method, we get $\sigma' \colon X' \longrightarrow X$, which is a composition of finite many blow-ups along smooth centers, such that $V' = \sigma'^{-1}(V)$ is smooth. 
        
        Let $R_{j} = (\sigma')^{*}T_{j}$ and $R'_{j} = (\sigma')^{*}T'_{j}$. In general, $R_{1}', \dots, R'_{m}$ do not necessarily have full mass intersection with respect to $R_{1}, \dots , R_{m}$. However, we have
        \begin{align*}
            \int_{X'} \langle \wedge_{j=1}^{m} R'_{j} \rangle \wedge ((\sigma')^{*}\omega)^{n-m}
            &= \int_{X} \langle \wedge_{j=1}^{m} T'_{j} \rangle \wedge \omega^{n-m}\\ 
            &= \int_{X} \langle \wedge_{j=1}^{m} T_{j} \rangle \wedge \omega^{n-m}\\ 
            &= \int_{X'} \langle \wedge_{j=1}^{m} R_{j} \rangle \wedge((\sigma')^{*}\omega)^{n-m}.
        \end{align*}
        Since $[V] \wedge \omega^{n-m} \neq 0$, it follows that $[V'] \wedge (\sigma'^{*}(\omega))^{n-m} \neq 0$. Hence, we can apply \textbf{step 1.} and \textbf{step 2.} in this setting, and get  
        \[
        \nu(T'_{j},V) = \nu(R'_{j},V')=\nu(R_{j},V') = \nu(T_{j},V)
        \]
        for some $j$. This completes the proof of Theorem \ref{maintheorem}.

\subsection{Proof of Theorem \ref{maintheroem2}}
        Let $\mathscr{B}$ be a closed cone in the cone of big classes, and let $V=\{x_{0}\}$ be a point in $X$. First, note that to prove inequality (\ref{inequa1.2}), it suffices to consider $\{\theta_{j}\} \in \mathscr{S} \cap \mathscr{B}$ for $j = 1 , \dots ,m$, where $\mathscr{S}$ is the unit sphere in $H^{1,1}(X,\R)$. Recall that in (\ref{inequ4}), we choose 
        \begin{equation*}
            \delta_{j}=(c \|\{\theta_{j}\}\|+\frac{\epsilon}{2c_{V}})^{-1}\frac{\lambda'_{j}-\lambda_{j}}{2},
       \end{equation*}
       for $j =1 ,\dots ,m$. Here, $\epsilon$ is derived from the K\"ahler currents $P_{j} \in \{\theta_{j}\}$ such that $P_{j} \geq \epsilon \omega$. Since $\mathscr{S} \cap \mathscr{B}$ is compact, we can choose $\epsilon$ to be independent of $\{\theta_{j}\} \in \mathscr{S} \cap \mathscr{B}$. Also, by the construction of $\widehat{X}$, the constant $c_{V}$ in (\ref{blowupkahler}) is independent of $V=\{x_{0}\}$. Therefore, (\ref{inequ4}) become 
        \begin{align*}
            \int_{X} \langle \wedge_{j=1}^{n}T_{j} \rangle - \langle \wedge_{j=1}^{n} T'_{j} \rangle\geq C\prod_{j=1}^{n} (\nu(T_{j},x_0)-\nu(T,x_{0})),
        \end{align*}
        where $C= \prod_{j=1}^{n-1}(2c \|\theta_{j}\|+ \frac{\epsilon}{c_{V}})^{-1} \vol(\widehat{V})$ is a constant depends only on the cone $\mathscr{B}$ and $X$. This completes the proof.

        \begin{remark}
        \label{relaxrmk}
            One should note that in Theorem \ref{maintheorem} and Theorem \ref{maintheroem2}, the assumption that $T_{j}$ is $\mathcal{I}$-model for $j=1 ,\dots,m$ is too strong. Instead, we only need to assume 
            \[
                \int_{X} \langle \wedge_{j=1}^{m} (\theta_{j})_{P^{\theta_{j}}[u_{j}]_{\mathcal{I}}} \rangle \wedge \omega^{n-m} = \int_{X} \langle \wedge_{j=1}^{m} (\theta_{j})_{u_{j}} \rangle \wedge \omega^{n-m} = \int_{X} \langle \wedge_{j=1}^{m} T_{j} \rangle \wedge \omega^{n-m}.
            \]

        \end{remark}

\bibliography{biblio_family_MA,biblio_Viet_papers}
\bibliographystyle{siam}

\bigskip

\noindent
\Addresses
\end{document}